\documentclass[reqno,a4paper,12pt]{amsart}
\usepackage{graphicx, amsfonts, amsmath}
\usepackage{caption}
\usepackage{amsthm}
\usepackage{fullpage}
\usepackage{float}
\usepackage{url}
\newtheorem{theorem}{Theorem}[section]

\newtheorem{conjecture}[theorem]{Conjecture}
\newtheorem{question}[theorem]{Question}
\newtheorem{prop}[theorem]{Proposition}
\newtheorem{corollary}[theorem]{Corollary}
\newcommand{\Z}{\mathbb{Z}}

\newcommand{\N}{\mathbb{N}}
\title{On sets of integers which contain no three terms in geometric progression}
\author{NATHAN MCNEW}
\address{Department of Mathematics, Dartmouth College \\ Hanover, NH 03755 USA}
\email{nathan.g.mcnew@dartmouth.edu}
\subjclass[2010]{11B05, 11B75, 11Y60, 05D10} 

\date{}

\begin{document}

\begin{abstract}
The problem of looking for subsets of the natural numbers which contain no 3-term arithmetic progressions has a rich history. Roth's theorem famously shows that any such subset cannot have positive upper density.  In contrast, Rankin in 1960 suggested looking at subsets without three-term geometric progressions, and constructed such a subset with density about 0.719.  More recently, several authors have found upper bounds for the upper density of such sets. We significantly improve upon these  bounds, and demonstrate a method of constructing sets with a greater upper density than Rankin's set.  This construction is optimal in the sense that our method gives a way of effectively computing the greatest possible upper density of a geometric-progression-free set.  We also show that geometric progressions in $\Z/n\Z$ behave more like Roth's theorem in that one cannot take any fixed positive proportion of the integers modulo a sufficiently large value of $n$ while avoiding geometric progressions.
\end{abstract}
\maketitle

\medskip
\section{Background} \label{background}
Let $A$ be a subset of the positive integers.  A three-term arithmetic progression in $A$ is a progression $(a,a+b,a+2b) \in A^3$ with $b>0$, or equivalently, a solution to the equation $a + c = 2b$ where $a,b$ and $c$ are distinct elements of $A$.  We say that $A$ is free of arithmetic progressions if it contains no such progressions.  For any subset $A$ of the positive integers we denote by $d(A)$ its asymptotic density (if it exists) and its upper density by $\bar{d}(A)$.  

In 1952 Roth proved the following famous theorem \cite{Roth}. 
\begin{theorem}[Roth] If $A$ is a subset of the positive integers with $\bar{d}(A)>0$ then $A$ contains a $3$-term arithmetic progression.
\end{theorem}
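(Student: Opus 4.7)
The plan is to use Fourier analysis on $\Z/N\Z$ combined with a density increment strategy, which is Roth's original approach. First I would reduce the infinite statement to a finite one: since $\bar{d}(A) = \delta > 0$, there exist arbitrarily large $N$ with $|A \cap [1,N]| \geq (\delta/2) N$, so it suffices to show that for every $\delta > 0$ there is some $N_0(\delta)$ such that any subset of $\{1,\ldots,N\}$ with density at least $\delta$ and $N \geq N_0(\delta)$ contains a nontrivial 3-term arithmetic progression.

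Next I would embed the problem into $\Z/N'\Z$ for a prime $N'$ slightly larger than $2N$ to prevent genuine 3-APs from being introduced by wraparound. Writing the balanced function $f = 1_A - \delta 1_{[1,N]}$ and expanding the count of 3-APs via the identity
\begin{equation*}
\#\{(a,a+b,a+2b) \in A^3\} = \frac{1}{N'}\sum_{r \in \Z/N'\Z} \widehat{1_A}(r)^2 \, \overline{\widehat{1_A}(2r)},
\end{equation*}
one separates the main term (of size roughly $\delta^3 N^2$) from the error. If $A$ has no nontrivial 3-APs, then the error term must cancel the main term, forcing some nonzero frequency $r$ to satisfy $|\widehat{f}(r)| \geq c\delta^2 N$.

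The key step is then to convert this single large Fourier coefficient into a density increment: one partitions $\{1,\ldots,N\}$ into subprogressions of common difference $q$ and length roughly $\sqrt{N}$ on which the character $x \mapsto e^{2\pi i r x / N'}$ is nearly constant (using Dirichlet's approximation theorem to choose $q$). A pigeonhole argument then locates a subprogression $P$ of length tending to infinity with $N$ on which $A$ has density at least $\delta + c\delta^2$. Rescaling $P$ to look like $\{1,\ldots,|P|\}$, one iterates the argument; since the density cannot exceed $1$, the process must terminate after $O(1/\delta)$ steps, and the only way termination can occur is by $A$ containing a 3-AP once $N$ is large enough, completing the proof.

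The main obstacle is the density increment step, both in executing the partition into arithmetic progressions of controlled length and in tracking the parameters through the iteration so that $N_0(\delta)$ is finite. The Fourier calculation itself is routine, but the quantitative bookkeeping is delicate: the subprogression length shrinks at each stage, so one must ensure it stays large enough to support a further application of the Fourier argument, which is what ultimately forces the tower-type bound $N_0(\delta) \approx \exp\exp(1/\delta)$ in Roth's original proof.
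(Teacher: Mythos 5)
The paper does not prove this statement: it is Roth's 1952 theorem, quoted with a citation to \cite{Roth} as background, so there is no proof in the paper to compare against. Your outline is the standard Fourier-analytic density-increment argument, and it is essentially correct: the reduction from upper density to a finite statement, the embedding in $\Z/N'\Z$ with $N'$ prime and slightly larger than $2N$, the trilinear Fourier identity, the extraction of a large Fourier coefficient of the balanced function, and the Dirichlet-approximation partition into subprogressions of length about $\sqrt{N}$ are all the right steps, and the iteration count and the doubly exponential bound you quote are what this argument gives. A few points you should make explicit when writing it out: the 3-AP count includes the $|A| \approx \delta N$ trivial progressions with common difference zero, which must be subtracted and are of smaller order than the main term $\delta^3 N^2$; the wraparound claim needs the one-line justification that if $x,y,z \in [1,N]$ satisfy $x+z \equiv 2y \pmod{N'}$ with $N' > 2N$, then $x+z-2y$ is an integer of absolute value less than $N'$ and hence zero, so the progression is genuine; the increment $\delta \mapsto \delta + c\delta^2$ terminates after $O(1/\delta)$ steps only via the doubling argument (each constant-factor increase of the density takes $O(1/\delta)$ steps at the current density, and the total is a geometric-type sum), which is worth spelling out; and the final step should note that the rescaling of a subprogression is affine, so a 3-AP found at a later stage pulls back to a 3-AP of the original set $A$. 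With those details filled in, this is Roth's original proof, stronger in quantitative content than what the paper needs, since the paper only invokes the qualitative statement together with the $\Z/N\Z$ bounds of Sanders quoted separately.
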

In particular, Roth showed that for any fixed $\alpha > 0$ and sufficiently large $N$, any subset of the integers $\{1,\cdots,N\}$ of size at least $\alpha N$ contains a 3-term arithmetic progression.  We can also view Roth's result as a statement about arithmetic progressions (with 3 distinct elements) in the group of integers mod $N$. Namely, if we denote by $D(\Z/N\Z)$ the size of the largest subset of $\Z/N\Z$ free of arithmetic progressions, Roth's argument can be used to  show \cite{Roth2} $D(\Z/N\Z) = O\left(\frac{N}{\log \log N}\right)$. This result has been improved several times, (see \cite{Heath}, \cite{Bourgain}, \cite{Szemer}) with the current best result, due to Sanders \cite{Sanders}, \begin{equation}\label{Sanders}D(\Z/N\Z) = O\left(\frac{N(\log \log N)^5}{\log N}\right).\end{equation} Roth's theorem has since been generalized by Szemer\'{e}di to progressions of arbitrary length.

Arithmetic-progression-free sets have also been studied in the context of arbitrary abelian groups.  Meshulam \cite{Meshulam} generalized Roth's theorem to finite abelian groups of odd order, and recently Lev \cite{Lev} has extended Meshulam's ideas to arbitrary finite abelian groups, a result which will be needed later.  Let $D(G)$ be the size of the largest subset of the finite group $G$ free of 3-term arithmetic progressions.  If 2 happens to divide $|G|$, it is possible to find examples of arithmetic progressions with repeated terms by looking at elements of order 2, so we insist that our arithmetic progressions consist of 3 distinct elements.  We also denote by $c(G)$ the number of components of the abelian group $G$ when written in the invariant factor decomposition $G \cong \Z/k_1\Z \times \Z/k_2\Z \times \cdots \times \Z/k_t\Z$ where $k_1\mid k_2\mid \cdots \mid k_t$, and write $2G$ for the group $\{g + g: g \in G\}$.

\begin{theorem}[Lev] For any abelian group $G$, $D(G)$ satisfies
\[D(G)\leq \frac{2|G|}{c(2G)}.\]
\end{theorem}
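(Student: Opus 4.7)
The plan is to mimic Meshulam's Fourier-analytic density increment argument for odd-order abelian groups, modifying the bookkeeping so that the induction is governed by the invariant factors of $2G$ rather than of $G$ itself. Let $A \subseteq G$ be 3-AP-free with $|A| = \alpha|G|$, and write $\hat{1_A}(\chi) = \sum_g 1_A(g)\overline{\chi(g)}$. Inserting the identity $\mathbf{1}[a+c-2b=0] = \frac{1}{|G|}\sum_\chi \chi(a+c-2b)$ yields
\[ T(A) \,:=\, \#\{(a,b,c)\in A^3 : a+c=2b\} \,=\, \frac{1}{|G|}\sum_{\chi\in\hat G}\hat{1_A}(\chi)^2\,\overline{\hat{1_A}(2\chi)}. \]
A short combinatorial check shows that the distinct-elements convention forces $T(A)\leq|A|\cdot|G[2]|$ whenever $A$ is 3-AP-free, since the only degenerate ordered triples have the form $(a,b,a)$ with $b-a\in G[2]$.

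The critical step is to split the Fourier sum according to whether $\chi$ lies in the 2-torsion subgroup $\hat G[2]=\{\chi:2\chi=0\}\cong\widehat{G/2G}$. Characters in $\hat G[2]$ take only the real values $\pm 1$, so every term $\hat{1_A}(\chi)^2$ in the $\hat G[2]$-part is real and nonnegative, and Plancherel on $G/2G$ combined with Cauchy--Schwarz shows that this part alone contributes at least $|A|^3/|G|$ to $T(A)$. In any regime where $\alpha^2|G|$ exceeds $|G[2]|$, this forces the remaining sum over $\chi\notin\hat G[2]$ to be substantially negative. Bounding that tail crudely by $|A|\cdot\max_{\psi\in 2\hat G\setminus\{0\}}|\hat{1_A}(\psi)|$ via Parseval then produces a nontrivial $\psi\in 2\hat G$ with $|\hat{1_A}(\psi)|$ at least of order $\alpha^2|G|$.

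Such a $\psi$ is a nontrivial character of the quotient $G/G[2]\cong 2G$. A standard pigeonhole argument now extracts a subgroup $H\leq G$ whose quotient $G/H$ is a cyclic factor of $2G$, together with a coset of $H$ on which $A$ has density at least $\alpha+\delta$ for some $\delta=\delta(\alpha)>0$. Restricting $A$ to this coset (and translating so it becomes a 3-AP-free subset of $H$) reduces the problem to the same one on a smaller group with $c(2H)\leq c(2G)-1$. Iterating, the density would have to exceed $1$ after $c(2G)$ steps unless we started with $\alpha\leq 2/c(2G)$, which is the stated bound.

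The principal obstacle is the invariant-factor bookkeeping. The large Fourier coefficient must genuinely land in $2\hat G$ so that the kernel yields a cyclic quotient of $2G$, and $c(2H)$ must actually drop by exactly one at each descent step. Isolating $\hat G[2]$ at the outset is precisely what forces the useful Fourier mass onto $2\hat G$: because the 2-torsion characters contribute nonnegatively, they cannot witness 3-AP-freeness, and the ``unhelpful'' characters are exactly those $\psi\notin 2\hat G$. Handling this split cleanly, together with the degenerate-triple count $T(A)\leq|A|\cdot|G[2]|$, is where the proof genuinely departs from Meshulam's odd-order original and where the delicate work lies.
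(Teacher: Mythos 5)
A preliminary remark: the paper you are working from does not prove this statement at all --- it is quoted from Lev's paper and used as a black box --- so your attempt has to be judged against Lev's original argument. Your outline is in the right spirit (Lev's proof is indeed a Meshulam-style Fourier/density-increment argument adapted to even torsion), and the pieces you do carry out are sound: the identity for $T(A)$, the count $T(A)\le |A|\cdot|G[2]|$ of degenerate triples under the distinct-elements convention, the observation that the characters $\chi$ with $2\chi=0$ contribute a nonnegative amount at least $|A|^3/|G|$, and the extraction of a large nontrivial coefficient at some $\psi\in 2\widehat{G}$, which is exactly a nontrivial character of $G/G[2]\cong 2G$.

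However, the proposal has genuine gaps precisely where the theorem lives. First, the descent step is both unproved and stated with the inequality pointing the wrong way: you claim the restriction to a coset of $H$ yields a group with $c(2H)\le c(2G)-1$, but the induction needs $c(2H)\ge c(2G)-1$ --- if the number of invariant factors of $2H$ could drop by more than one, the inductive bound $2|H|/c(2H)$ would be too weak and the iteration could terminate after far fewer than $c(2G)$ steps. Securing this requires a structural lemma (the analogue of Meshulam's lemma on kernels of characters) guaranteeing that $H=\ker\psi$ contains a subgroup whose doubled group retains all but one invariant factor of $2G$; you explicitly defer this "invariant-factor bookkeeping," but it is the heart of Lev's proof, not a routine afterthought. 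Second, the quantitative side is missing: a density increment of an unspecified $\delta(\alpha)>0$ cannot produce the constant $2$. One needs an increment of order $\alpha^2/2$ on the coset (so that $1/\alpha$ decreases by roughly $1/2$ per step), and one must verify at \emph{every} stage of the iteration --- not just the first --- that the error term $|H[2]|$ from degenerate triples is negligible, i.e.\ that $\alpha^2>1/|2H|$ for the current group $H$, since the 2-torsion changes as you descend. Without this tracking the argument yields some bound, but not $D(G)\le 2|G|/c(2G)$. As it stands, then, the proposal is a correct high-level reconstruction of the known strategy with the two decisive steps asserted rather than proved.
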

Combining Lev's theorem with Sanders' bound as in \cite[Corollary 1.3]{Meshulam}  one obtains
\begin{corollary} \label{Lcor} For any abelian group $G$ for which $c(G) = c(2G)$
\[D(G) \ll \frac{|G|(\log \log |G|)^5}{(\log |G|)^{\frac{1}{2}}}.\]
\end{corollary}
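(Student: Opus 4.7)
The plan is to combine Lev's theorem with a cyclic-subgroup pigeonhole argument fed by Sanders' estimate~(\ref{Sanders}). Set $t = c(G) = c(2G)$, so that Lev's theorem directly gives
\[D(G) \leq \frac{2|G|}{t}.\]
This is strong when $t$ is large; the remaining task is to derive a complementary bound that is strong when $t$ is small, and then balance the two estimates.

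Writing $G \cong \Z/k_1\Z \times \cdots \times \Z/k_t\Z$ with $k_1 \mid k_2 \mid \cdots \mid k_t$, the inequalities $k_i \leq k_t$ yield $k_t \geq |G|^{1/t}$, and the last factor embeds as a cyclic subgroup $C \cong \Z/k_t\Z$ of $G$. The $|G|/k_t$ cosets of $C$ partition $G$, and for any AP-free set $A \subseteq G$ and any coset representative $g$, the translate $(A-g)\cap C$ is AP-free in $C$, since a 3-term AP within a coset of $C$ is already a 3-term AP in $G$. By pigeonhole and Sanders' bound applied to $C$,
\[D(G) \leq \frac{|G|}{k_t}\, D(\Z/k_t\Z) \ll \frac{|G|(\log\log k_t)^5}{\log k_t} \leq \frac{t\,|G|(\log\log|G|)^5}{\log|G|},\]
where the last step uses $\log k_t \geq (\log|G|)/t$.

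Balancing $2|G|/t$ against this estimate, the optimal choice $t \asymp (\log|G|)^{1/2}/(\log\log|G|)^{5/2}$ produces a bound of order $|G|(\log\log|G|)^{5/2}/(\log|G|)^{1/2}$, which is in fact slightly sharper than the stated corollary. The one delicate point is making sure the denominators in the two estimates genuinely refer to the same parameter: the hypothesis $c(G) = c(2G)$ guarantees that Lev's theorem does not lose a factor from the $2$-torsion of $G$, so that $t$ plays a dual role---as the number of invariant factors controlling Lev's denominator and as the exponent controlling the size of the maximal cyclic subgroup---and the two bounds can be matched cleanly at the balance point.
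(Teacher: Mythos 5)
Your proof is correct and is essentially the argument the paper intends (it cites Meshulam's Corollary 1.3 for exactly this): play Lev's bound $2|G|/c(2G)$ against the coset--pigeonhole bound through the largest cyclic invariant factor --- the pigeonhole step being precisely Proposition \ref{subgp} --- with Sanders' estimate \eqref{Sanders} applied to that cyclic subgroup, and balance the two. Your balancing in fact yields the slightly sharper exponent $(\log\log|G|)^{5/2}$, which of course implies the stated bound.
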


In a completely analogous manner, one can consider three-term geometric progressions of integers of the form ($a,ak,ak^2$) with $k \in \mathbb{Q}$ and $k>1$ (equivalently, a solution to the equation $ac=b^2$ with distinct integers $a,b,c$) and seek sets of integers which are free of such progressions. It is somewhat surprising just how different the results are in this case. We can immediately see, for example, that the square-free integers, a set with asymptotic density $\frac{6}{\pi^2} \approx 0.61$, is free of geometric progressions. 

Unlike the difference of two terms in an arithmetic progression, the ratio between successive terms of a geometric progression of integers need not be an integer.  For example, the progression $(4,6,9)$ is a geometric progression with common ratio $\frac{3}{2}$.  While most of the existing literature on this problem is concerned only with the rational ratio case, we will also consider the problem restricted to integer ratios in the results that follow. 

The problem of finding sets of integers free of geometric progressions was first considered by Rankin \cite{Rankin} who showed

\begin{theorem}\label{rankinconstruction} There exists a set, $G^*_3$, free of geometric progressions with asymptotic density $  \frac{1}{\zeta(2)}\prod_{i>0}\frac{\zeta(3^i)}{\zeta(2\cdot 3^i)}>0.71974.$ (Here $\zeta$ is the Riemann zeta function.)
\end{theorem}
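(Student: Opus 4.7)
My plan is to use the classical observation linking the multiplicative structure of $\N$ to the additive structure of exponent sequences. The first step is to note that every positive integer $n$ factors uniquely as $n = \prod_p p^{v_p(n)}$, and that a triple $(a,b,c)$ of positive integers satisfies $ac = b^2$ if and only if $v_p(a) + v_p(c) = 2 v_p(b)$ for every prime $p$. Consequently, given any set $A \subseteq \Z_{\geq 0}$ with $0 \in A$ that is free of $3$-term arithmetic progressions, the set
$$G_A = \{n \in \N : v_p(n) \in A \text{ for all primes } p\}$$
is free of $3$-term geometric progressions. The problem is thus reduced to choosing an $A$ that makes $G_A$ as dense as possible.

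For the explicit construction giving the stated density, I would take $A = A_3$, the set of nonnegative integers whose base-$3$ expansion uses only the digits $0$ and $1$. This is $3$-AP-free because if $a + c = 2b$ with $a,b,c \in A_3$, then base-$3$ addition of $a$ and $c$ involves no carries (each digit sum is at most $1+1 = 2$), so the equality must hold digit by digit, forcing $a = b = c$.

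The bulk of the work is computing the density of $G_{A_3}$. For a single prime $p$, the set of $n$ with $v_p(n) = k$ has natural density $(1-p^{-1})p^{-k}$, so the density of $n$ with $v_p(n) \in A_3$ is
$$(1-p^{-1})\sum_{k \in A_3} p^{-k} = (1-p^{-1})\prod_{i \geq 0}(1 + p^{-3^i}).$$
Because the condition at each prime is a congruence constraint, for any finite $M$ the density of the truncated set $G_M = \{n : v_p(n) \in A_3 \text{ for all } p \leq M\}$ is exactly the product of these factors for $p \leq M$.

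The step I expect to be the main obstacle is passing from this finite product to the infinite one, i.e., showing $d(G_M) - d(G_{A_3}) \to 0$. The key observation is that $2$ is the smallest positive integer not in $A_3$, so any $n \in G_M \setminus G_{A_3}$ must satisfy $p^2 \mid n$ for some prime $p > M$; the density of this exceptional set is at most $\sum_{p > M} p^{-2}$, which tends to $0$. This yields
$$d(G_{A_3}) = \prod_p (1 - p^{-1})\prod_{i \geq 0}(1 + p^{-3^i}).$$
The remainder is cosmetic: applying $1 + x = (1 - x^2)/(1 - x)$, the $i = 0$ term combines with $(1 - p^{-1})$ to give $(1 - p^{-2})$, producing the factor $\prod_p(1 - p^{-2}) = 1/\zeta(2)$, while each $i \geq 1$ factor rearranges into $\zeta(3^i)/\zeta(2 \cdot 3^i)$. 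Since $\zeta(3^i)/\zeta(2\cdot 3^i) = 1 + O(2^{-3^i})$, the product converges extremely rapidly and direct numerical evaluation confirms the bound $> 0.71974$.
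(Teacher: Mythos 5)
Your proposal is correct and follows essentially the same route as the paper: the same valuation reduction ($ac=b^2$ forces an arithmetic progression of $p$-adic valuations), the same exponent set (digits $0,1$ in base $3$, which is exactly the greedy set $A^*_3$), and the same Euler product yielding $\frac{1}{\zeta(2)}\prod_{i>0}\zeta(3^i)/\zeta(2\cdot 3^i)$. The only difference is that you justify the passage from the finite to the infinite product via the tail bound $\sum_{p>M}p^{-2}$, a point the paper dispatches by appealing to independence of divisibility by distinct primes.
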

Since then, several papers have further investigated the problem of finding the largest possible sets which are geometric-progression-free.  We will use the notation
\begin{align*} \overline{\alpha} &= \sup\{\bar{d}(A):A \subset \N \text{ is free of geometric progressions}\}\\
\alpha &= \sup\{d(A): A \subset \N \text{ is free of geometric progressions and } d(A) \text{ exists}\}
\end{align*}
and the symbols $\overline{\beta}, \beta$ for the corresponding values when we restrict the problem to integer ratios.  Clearly $\alpha \leq \overline{\alpha}$.  Rankin's construction remains the best lower bound for $\alpha$.  Riddell \cite{Riddell} showed that $\overline{\alpha} \leq 6/7$. This bound was reproved by Beiglb\"{o}ck, Bergelson, Hindman, and Strauss \cite{BBHS}, who were unaware of Riddell's result.  Their result was an improvement of a bound obtained by Brown and Gordon \cite{BG} who, also unaware of Riddell's result, had shown that $\overline{\alpha}<0.86889$.  Nathanson and O'Bryant \cite{NO} combined these methods to show that $\overline{\alpha} < 0.84948$. (They also corrected an error in the calculations of \cite{NO} for progressions of length greater than 3.)

While the existing literature on geometric-progression-free sets has worked in greater generality, considering geometric progressions of length $k \geq 3$, we have chosen in this paper to focus on progressions of length 3.  While most of the methods and constructions in this paper generalize to arbitrary $k$, they don't, in general, appear to lead to a closed form expression in terms of $k$.  We look first at the problem of finding geometric progressions of the residues mod $n$ in Section \ref{modn} and find that any positive proportion of such residues will contain a geometric progression for large enough $n$.  

Section \ref{integer} considers sets of integers free of progressions with integral ratios and shows that one can construct such sets with substantially higher upper density than Rankin's set.  

Our main results are in Section \ref{bounds}, where we demonstrate an algorithm for effectively computing the value of $\overline{\alpha}$.  We use this method to  significantly improve the best known upper bound for $\overline{\alpha}$, from 0.84948 down to 0.772059, and to show that $\overline{\alpha}>0.730027$, showing for the first time that $\overline{\alpha}$ is greater than the density of Rankin's construction.  These results are also applied to $\overline{\beta}$, which we are able to compute to somewhat better precision.  Finally, in Section \ref{density} we investigate bounds for sets which have an asymptotic density.

\section{Geometric Progressions in $\Z/n\Z$} \label{modn}
In light of Rankin's result, showing that in contrast to the arithmetic progression case we can take sets of integers with positive density (even the majority of integers) and still avoid geometric progressions, it is somewhat surprising that this does not remain true when we look at the integers mod $n$.  We need, first, a proposition and a corollary of Lev's theorem for unit groups $(\Z/n\Z)^\times$.

\begin{prop} \label{subgp} Given any subgroup $H$ of an abelian group $G$
\[D(G) \leq \frac{|G|D(H)}{|H|}.\]
\end{prop}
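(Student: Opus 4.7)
The plan is to partition $G$ into its cosets modulo $H$ and bound the intersection of a maximal AP-free subset of $G$ with each coset separately.

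First, let $A \subseteq G$ be an AP-free subset realizing $|A| = D(G)$. Write $G$ as the disjoint union of its $[G:H] = |G|/|H|$ cosets $g_1 + H, \ldots, g_r + H$. Then
\[
|A| = \sum_{i=1}^{r} |A \cap (g_i + H)|.
\]
The goal reduces to showing $|A \cap (g_i + H)| \leq D(H)$ for every $i$, after which summing gives the claimed bound.

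For each coset $g_i + H$, consider the translate $A_i := (A \cap (g_i + H)) - g_i \subseteq H$. Translation by a fixed element is an affine bijection of $G$ that sends 3-term APs to 3-term APs (with distinct terms preserved), since $(a, b, c)$ satisfies $a + c = 2b$ if and only if $(a - g_i, b - g_i, c - g_i)$ does. Moreover, any 3-term AP inside $A_i \subseteq H$ would pull back to a 3-term AP inside $A \cap (g_i + H) \subseteq A$, contradicting AP-freeness of $A$. Hence $A_i$ is an AP-free subset of $H$, so $|A \cap (g_i + H)| = |A_i| \leq D(H)$.

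Combining the two steps,
\[
D(G) = |A| \leq \sum_{i=1}^{r} D(H) = \frac{|G|}{|H|} \cdot D(H),
\]
which is the desired inequality. There is no real obstacle here; the only subtlety worth flagging is the "distinct elements" convention for APs, which is automatically respected because a translation preserves distinctness and because the APs we pull back lie inside a single coset, so nothing degenerate is introduced.
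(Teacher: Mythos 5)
Your proof is correct and takes essentially the same approach as the paper: both rest on the observation that translating $A \cap (g_i + H)$ back into $H$ yields an AP-free subset of $H$, hence of size at most $D(H)$. The only cosmetic difference is that you sum this bound over all $|G|/|H|$ cosets, whereas the paper applies the pigeonhole principle to a single coset meeting a maximal AP-free set $A$ in at least $|A||H|/|G|$ elements; the two counts are equivalent.
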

\begin{proof}
If $A \subset G$ is free of arithmetic progressions, then for every coset $xH$ the set $A \cap xH$ is free of arithmetic progressions, and by the pigeonhole principle there exists at least one coset with \[|A \cap xH|=|x^{-1}A \cap H|\geq \frac{|A||H|}{|G|}.\]  Thus, since $(x^{-1}A)\cap H$ is a subset of $H$ free of arithmetic progressions,
\[|A|\leq \frac{|G||(x^{-1}A)\cap H|}{|H|} \leq \frac{|G| D(H)}{|H|}. \qedhere \]
\end{proof}
\begin{corollary} \label{unit} For any integer $n$ the group $(\Z/n\Z)^\times$ of units$\mod n$ satisfies
\[D((\Z/n\Z)^\times) \ll \frac{\varphi(n)(\log \log n)^5}{(\log n)^{\frac{1}{2}}}.\]
\end{corollary}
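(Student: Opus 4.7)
The plan is to apply Corollary \ref{Lcor} to a carefully chosen subgroup $H$ of $G := (\Z/n\Z)^\times$ satisfying the hypothesis $c(H)=c(2H)$, and then transfer the resulting bound to $G$ via Proposition \ref{subgp}. For the scheme to give the correct asymptotic, I need $\log|H| \asymp \log n$, so that the $(\log|H|)^{1/2}$ factor supplied by the corollary remains comparable to $(\log n)^{1/2}$.

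To construct $H$, I will fix the invariant factor decomposition $G\cong \Z/k_1\Z\times\cdots\times\Z/k_t\Z$ with $k_1\mid k_2\mid\cdots\mid k_t$, and let $r$ be the number of indices $i$ with $k_i=2$. Taking $H$ to be the subgroup obtained by discarding those $\Z/2\Z$-summands, the remaining invariant factors $k_{r+1},\ldots,k_t$ are all at least $3$, and since $k/\gcd(k,2)\geq 2$ whenever $k\geq 3$, doubling does not collapse any of them; hence $c(H)=c(2H)$ and Corollary \ref{Lcor} applies. Moreover $|H| = \varphi(n)/2^r$.

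The key quantitative step is a bound on $r$. Since each $k_i=2$ invariant factor is in particular an even one, $r$ is at most the $\F_2$-rank of the 2-torsion $G[2]=\{x\bmod n:x^2\equiv 1\}$. A Chinese Remainder Theorem calculation yields $|G[2]|\leq 2^{\omega(n)+1}$, and combining with the classical Landau bound $\omega(n)\ll \log n/\log\log n$ together with the Mertens-type estimate $\log\varphi(n)=\log n-O(\log\log\log n)$ gives
\[\log|H|=\log\varphi(n)-r\log 2 \geq \log n - O(\log n/\log\log n) = (1-o(1))\log n,\]
so $\log|H|\sim\log n$ and $\log\log|H|\sim\log\log n$.

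Plugging into Corollary \ref{Lcor} applied to $H$ and then invoking Proposition \ref{subgp} with $H\leq G$ now gives
\[D(G)\leq \frac{|G|\,D(H)}{|H|} \ll |G|\cdot \frac{(\log\log|H|)^5}{(\log|H|)^{1/2}} \ll \frac{\varphi(n)(\log\log n)^5}{(\log n)^{1/2}},\]
with the finitely many small $n$ (such as $n\in\{1,2,3,4,6,8,12,24\}$, where $G$ is an elementary abelian $2$-group) absorbed into the implicit constant. The main obstacle is the invariant-factor bookkeeping: checking both that stripping off all $\Z/2\Z$-summands produces a subgroup which genuinely satisfies $c(H)=c(2H)$ and that the associated loss $2^r$ is subpolynomial in $n$. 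Both points reduce to elementary facts about abelian groups and standard estimates for $\omega(n)$ and $\varphi(n)$.
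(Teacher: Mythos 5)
Your proposal is correct and follows essentially the same route as the paper: both strip the $\Z/2\Z$ invariant factors from $(\Z/n\Z)^\times$ to get a subgroup $H$ with $c(H)=c(2H)$, bound the number of discarded factors by $\omega(n)+1$ via the CRT decomposition, use $\omega(n)\ll \log n/\log\log n$ and the lower bound on $\varphi(n)$ to get $\log|H|=(1+o(1))\log n$, and conclude with Corollary \ref{Lcor} and Proposition \ref{subgp}. Your explicit verification that doubling does not collapse the remaining invariant factors, and your remark on the small-$n$ degenerate cases, are just slightly more detailed versions of steps the paper treats implicitly.
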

\begin{proof} When $G=(\Z/n\Z)^\times$ then $|G| = \varphi(n)$. Write $G = \Z/2\Z \times \cdots \Z/2\Z \times H$ where $H$ is free of copies of $\Z/2\Z$ in its invariant factor decomposition (and hence $c(H) = c(2H)$).  Let $n=p_1^{k_1}p_2^{k_2}\cdots p_{\omega(n)}^{k_{\omega(n)}}$ be the prime factorization of $n$.  
Since $(\Z/n\Z)^\times \cong (\Z/p_1^{k_1}\Z)^\times \times (\Z/p_2^{k_2}\Z)^\times \times \cdots \times (\Z/p_{\omega(n)}^{k_\omega(n)}\Z)^\times$ and each group $(\Z/p_j^{k_j}\Z)^\times$ contains at most one copy of $\Z/2\Z$ when $p_j \neq 2$ and at most two if $p_j = 2$, we find that $G$ has at most $\omega(n) + 1$ copies of $\Z/2\Z$.  Thus the subgroup $H$ has size at least $\frac{\varphi(n)}{2^{\omega(n)+1}}$.  Thus, using Corollary \ref{Lcor} and Proposition \ref{subgp},
\begin{align*} D(G) \leq \frac{|G| D(H)}{|H|} & \ll \frac{|G| |H|(\log \log |H|)^5}{|H|(\log |H|)^{\frac{1}{2}}} \\
&\leq \frac{\varphi(n)(\log\log n)^5}{(\log (\frac{\varphi(n)}{2^{\omega(n)+1}}))^{\frac{1}{2}}}. 
\end{align*}
Using the facts \cite[Theorem~323~and~Section~22.10]{HW} that $\lim \inf \frac{\varphi(n) \log \log n}{n} = e^{-\gamma}$ and $\omega(n) = O\left(\frac{\log n}{\log \log n}\right)$, we have

\begin{align*} \log\left(\varphi(n)/2^{\omega(n)+1}\right) &\geq \log\left(\frac{n}{e^\gamma \log \log n}(1+o(1))\right) - (\omega(n)+1)\log(2)   \\
& = \log n + O\left(\frac{\log n}{\log \log n}\right).
\end{align*}
So 
\[D(G) \ll \frac{\varphi(n)(\log \log n)^5}{(\log n)^{\frac{1}{2}}}. \qedhere \]
\end{proof}

\begin{theorem}\label{geomod}
Let $E(\Z/n\Z)$ denote the size of the largest possible subset of the residues $\mod n$ which does not contain a $3$-term geometric progression.  Then \[E(\Z/n\Z) \ll \frac{n(\log \log n)^5}{(\log n)^{1/2}}.\]
\end{theorem}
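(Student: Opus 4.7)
My plan is to partition a hypothetical GP-free set $A \subset \Z/n\Z$ by each element's gcd with $n$, and for each piece reduce the problem to the unit-group bound of Corollary \ref{unit}. For each divisor $d$ of $n$, write $A_d = \{a \in A : \gcd(a,n) = d\}$; dividing by $d$ identifies $A_d$ with a subset $A_d' \subset (\Z/(n/d)\Z)^\times$. Setting $m_d = n/\gcd(n, d^2)$, which divides $n/d$, I would consider the reduction homomorphism $\pi \colon (\Z/(n/d)\Z)^\times \to (\Z/m_d\Z)^\times$.

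The key step is to observe that $\pi(A_d')$ is free of 3-term geometric progressions of distinct elements in the abelian group $(\Z/m_d\Z)^\times$ (written multiplicatively, a GP is precisely a 3-term progression there). Indeed, if $\pi(b_1), \pi(b_2), \pi(b_3)$ were distinct with $b_2^2 \equiv b_1 b_3 \pmod{m_d}$, then multiplying through by $d^2$ and using $n \mid d^2 m_d$ would give $(db_2)^2 \equiv (db_1)(db_3) \pmod n$, with $db_1, db_2, db_3$ distinct in $\Z/n\Z$, a GP in $A$. Applying Corollary \ref{unit} to $(\Z/m_d\Z)^\times$, and using that $\pi$ is surjective with kernel of size $\varphi(n/d)/\varphi(m_d)$, then gives
\[|A_d| \;\leq\; D\bigl((\Z/m_d\Z)^\times\bigr) \cdot \frac{\varphi(n/d)}{\varphi(m_d)} \;\ll\; \varphi(n/d) \cdot \frac{(\log\log m_d)^5}{(\log m_d)^{1/2}}.\]

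Finally, I would sum over $d \mid n$, using $\log m_d \geq \log n - 2\log d$. For $d \leq n^{1/4}$ this gives $\log m_d \geq \tfrac12 \log n$, and since $\sum_{d \mid n} \varphi(n/d) = n$, these divisors contribute $\ll n(\log\log n)^5/(\log n)^{1/2}$. For $d > n^{1/4}$ the trivial bound $|A_d| \leq \varphi(n/d) \leq n^{3/4}$ combined with the fact that $n$ has only $n^{o(1)}$ divisors gives total contribution $n^{3/4 + o(1)}$, which is absorbed into the main term. The main obstacle is carrying out the reduction cleanly: identifying the correct modulus $m_d$ so that Corollary \ref{unit} applies, and verifying that distinctness is preserved by $\pi$ so that the multiplicative progression really is honest; the divisor sum itself is then routine.
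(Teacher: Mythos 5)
Your overall skeleton matches the paper's proof: partition the set by $\gcd$ with $n$, identify each piece with a subset of a unit group, apply Corollary \ref{unit}, and finish with a divisor sum split between small and large divisors (the paper splits at $\sqrt{n}$ rather than $n^{1/4}$, but your split and your handling of the tail are fine, and the quantitative bookkeeping, including $\log m_d \geq \log n - 2\log d$ and $\sum_{d\mid n}\varphi(n/d)=n$, is correct). The issue is in the key step. The triple you produce satisfies only the relation $(db_2)^2 \equiv (db_1)(db_3) \pmod n$; it need not be a geometric progression in the sense of having a common ratio. If $\{db_1,db_2,db_3\}$ were of the form $\{a,ak,ak^2\}$ mod $n$, then cancelling $d$ forces $k\equiv b_2b_1^{-1} \pmod{n/d}$ and hence $b_2^2\equiv b_1b_3 \pmod{n/d}$ --- but your hypothesis gives this congruence only modulo $m_d=n/\gcd(n,d^2)$, which is a proper divisor of $n/d$ whenever $\gcd(d,n/d)>1$ (e.g.\ $n=p^3$, $d=p$: the triple $(p,2p,9p)$ mod $p^3$ satisfies $y^2\equiv xz$ but has no common ratio). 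So your argument proves the theorem only under the reading ``$x,y,z$ distinct with $xz\equiv y^2 \pmod n$''; it does not rule out sets that merely avoid common-ratio progressions $(a,ak,ak^2)$, which is the notion the paper's own proof handles (its forbidden configurations $(da,dab,dab^2)$ are genuine ratio-$b$ progressions), and which is the stronger statement since fewer triples are forbidden and $E$ is correspondingly larger.

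The repair is to drop the reduction mod $m_d$ altogether, which is exactly the paper's route and is both simpler and stronger. A $3$-term progression $(b_1,b_1k,b_1k^2)$ with distinct elements inside $(\Z/(n/d)\Z)^\times$ lifts, upon multiplying by $d$, to the common-ratio progression $(db_1,db_1k,db_1k^2)$ in $\Z/n\Z$ (take any integer lift of $k$), all of whose terms have $\gcd$ exactly $d$ with $n$ and are distinct. Hence $A_d'$ itself must be progression-free in $(\Z/(n/d)\Z)^\times$, giving $|A_d|\leq D\bigl((\Z/(n/d)\Z)^\times\bigr)$ directly, with no kernel factor $\varphi(n/d)/\varphi(m_d)$ and no worry about distinctness surviving a further projection. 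Your divisor-sum argument then goes through verbatim (with $\log(n/d)\geq \tfrac12\log n$ for $d\leq\sqrt{n}$, say), and yields the stated bound for the common-ratio notion, hence a fortiori for the $xz\equiv y^2$ notion as well.
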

\begin{proof} For each $d\mid n$ let $R_d$ denote the set of $m \in \Z/n\Z$ such that $(m,n)=d$.  So, for example, when $d=1$, $R_1 = (\Z/n\Z)^\times$.  Note that $R_d$ can be viewed as $d(\Z/\frac{n}{d}\Z)^\times$ in the sense that each element of $R_d$ is uniquely representable as $d$ times a residue coprime to $\frac{n}{d}$.  Thus $|R_d|=\varphi(\frac{n}{d})$. 

Furthermore, any $\lq\lq$arithmetic" progression, $(a,ab,ab^2)$, (written multiplicatively) in the group $(\Z/\frac{n}{d}\Z)^\times$ corresponds to the $\lq\lq$geometric" progression $(da,dab,dab^2)$ contained in the set of residues $R_d$.  So any geometric-progression-free subset of $R_d$ cannot be larger than $D((\Z/\frac{n}{d}\Z)^\times)$.  Because the $R_d$ partition $\Z/n\Z$, we see that 
\begin{align}E(\Z/n\Z) \leq \sum_{d\mid n} D((\Z/d\Z)^\times) &= \sum_{\substack{d\mid n\\ d<\sqrt{n}}}D((\Z/d\Z)^\times) + \sum_{\substack{d\mid n\\ d\geq \sqrt{n}}}D((\Z/d\Z)^\times) \nonumber \\
& \ll \sum_{\substack{d\mid n\\ d<\sqrt{n}}}\sqrt{n} + \sum_{\substack{d\mid n\\ d\geq \sqrt{n}}}\frac{\varphi(d)(\log \log d)^5}{(\log d)^{\frac{1}{2}}} \label{levineq} \\
& < \sum_{\substack{d\mid n}}\sqrt{n} + \frac{(\log\log n)^5}{(\frac{1}{2}\log n)^{\frac{1}{2}}} \sum_{\substack{d\mid n}}\varphi(d) \nonumber \\
& \ll n^{1/2+\epsilon}+ \frac{n(\log \log n)^5}{(\log n)^{\frac{1}{2}}} \ll \frac{n(\log \log n)^5}{(\log n)^{\frac{1}{2}}}. \nonumber
\end{align}
Where we used the fact that the number of divisors of $n$ is $O(n^\epsilon)$ for every $\epsilon >0$.
\end{proof}

Bounding the exponent on $\log n$ in this theorem is complicated by the fact that the size, $\lambda(n)$, of the largest cyclic subgroup of the unit group $(\Z/n\Z)^\times$ can occasionally be very small.  In general, however, this is not the case. In fact, we know \cite[Lemma 2]{PomLambda} that there exists a set $S$ of integers with asymptotic density 1 such that for all $n$ in $S$, $\lambda(n) = n/(\log n)^{\log \log \log n + O(1)}$.  

Let $n \in S$ and let $H$ be a cyclic subgroup of $(\Z/n\Z)^\times$ of size $\lambda(n)$.  Then, using Sanders bound \eqref{Sanders} and Proposition \ref{subgp}
\begin{align*}
D((\Z/n\Z)^\times) \leq & \frac{|(\Z/n\Z)^\times|D(H)}{|H|} \\
\ll & \frac{\varphi(N) |H| (\log \log |H|)^5}{|H| \log |H|} \\
= & \frac{\varphi(n) (\log \log (n/(\log n)^{\log \log \log n + O(1)}))^5}{\log (n/(\log n)^{\log \log \log n + O(1)})} \\
\ll & \frac{\varphi(n)(\log \log n)^5}{\log n}.
\end{align*} 
We know \cite[Lemma~2]{Pom123} that for each $d\mid n$, \[\frac{\lambda(d)}{d} \geq \frac{\lambda(n)}{n},\] so for $n \in S$, and $d|n$, $\lambda(d) \geq \frac{d}{(\log n)^{\log \log \log n + O(1)}}$.  If we assume $d>\sqrt{n}$ then the above argument shows that $D((\Z/d\Z)^\times) \ll \frac{\varphi(d)(\log \log d)^5}{\log d}$. Using this inequality  in place of Corollary \ref{unit} in line \eqref{levineq} of the proof above, we have
\begin{theorem} For $n \in S$, $E(\Z/n\Z) \ll \frac{n(\log \log n)^5}{\log n}$.  
\end{theorem}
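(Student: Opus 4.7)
The plan is to essentially re-run the proof of Theorem \ref{geomod}, partitioning $\Z/n\Z$ into the sets $R_d = \{m : (m,n) = d\}$ for $d \mid n$, which satisfy $|R_d| = \varphi(n/d)$ and have the property (shown in the previous proof) that every geometric-progression-free subset of $R_d$ corresponds to an arithmetic-progression-free subset of $(\Z/\frac{n}{d}\Z)^\times$. This gives
\[E(\Z/n\Z) \leq \sum_{d\mid n} D((\Z/d\Z)^\times),\]
and I would split the sum at $\sqrt{n}$ exactly as before.

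For the small divisors $d<\sqrt{n}$, I would use the trivial bound $D((\Z/d\Z)^\times) \leq d < \sqrt{n}$, which contributes $O(n^{1/2+\epsilon})$ after accounting for the divisor function being $O(n^\epsilon)$. The key new ingredient is the improved bound for the large divisors: since $n \in S$ and $d \mid n$ with $d \geq \sqrt{n}$, the inequality $\lambda(d)/d \geq \lambda(n)/n$ from \cite[Lemma~2]{Pom123} gives $\lambda(d) \geq d/(\log n)^{\log\log\log n + O(1)}$, so the cyclic subgroup argument of size $\lambda(d)$ combined with Sanders' bound \eqref{Sanders} yields $D((\Z/d\Z)^\times) \ll \varphi(d)(\log\log d)^5/\log d$, with a full power of $\log d$ rather than $(\log d)^{1/2}$.

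Substituting this into the sum over large divisors, I would pull out the monotone factor $(\log\log d)^5/\log d \leq (\log\log n)^5/(\frac{1}{2}\log n)$ (valid since $d\geq\sqrt{n}$), leaving $\sum_{d\mid n}\varphi(d) = n$. Adding the two pieces yields $E(\Z/n\Z) \ll n^{1/2+\epsilon} + n(\log\log n)^5/\log n \ll n(\log\log n)^5/\log n$, as claimed. There is no real obstacle here: the entire point is that the bookkeeping of Theorem \ref{geomod}'s proof goes through unchanged once the stronger per-divisor estimate is available, and the author has already established that estimate in the paragraphs immediately preceding the statement. The only minor care needed is ensuring the replacement of $\log n$ by $\log d$ in the new bound does not cost anything in the large-divisor range, which is guaranteed by $d \geq \sqrt{n}$.
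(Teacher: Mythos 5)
Your proposal is correct and follows essentially the same route as the paper: the author explicitly obtains the theorem by substituting the improved bound $D((\Z/d\Z)^\times) \ll \varphi(d)(\log\log d)^5/\log d$ (valid for $n \in S$, $d \mid n$, $d > \sqrt{n}$, via $\lambda(d)/d \geq \lambda(n)/n$ and Sanders' bound on a cyclic subgroup) into line \eqref{levineq} of the proof of Theorem \ref{geomod}. Your bookkeeping, including the split at $\sqrt{n}$, the trivial bound for small divisors, and the use of $d \geq \sqrt{n}$ to control $\log d$, matches the paper's argument.
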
 

\section{Geometric Progressions with Integer Ratio} \label{integer}
As mentioned before, it is possible to consider geometric progressions of two different types, depending on whether or not the ratio common to the progression is an integer.  One would expect that restricting to the integer case should allow us to construct sets with larger asymptotic density, since there are less restrictions on our set.  While Rankin's set, $G^*_3$ is constructed to avoid rational geometric progressions, no integers can be added to it without introducing an integer geometric progression either.  Nevertheless, we can construct sets with substantially higher upper density.  The following construction extends the method described in \cite[Theorem 3.2]{BBHS}.
\begin{theorem} \label{intup} It is possible to construct sets of integers free of geometric progressions (with integer ratio) with upper density greater than $0.815509$.  Using the notation introduced at the end of Section 1, this implies that $\overline{\beta} >0.815509$.
\end{theorem}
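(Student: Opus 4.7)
The plan is to construct an explicit set $A \subset \mathbb{N}$ free of integer-ratio three-term geometric progressions with $\overline{d}(A) > 0.815509$, extending the Rankin-style multiplicative framework used in [BBHS, Theorem 3.2]. First I would fix, for each prime $p$, a 3-term arithmetic-progression-free set $B_p \subset \mathbb{Z}_{\geq 0}$ (the natural first choice being Rankin's greedy set of non-negative integers whose base-3 expansion uses only $0$s and $1$s, but possibly a $p$-dependent optimum). With this, set
\[
A_0 \;=\; \{\,n \in \mathbb{N} : v_p(n) \in B_p \text{ for every prime } p\,\}.
\]
I would verify the key combinatorial property: if $(a, ar, ar^2) \in A_0^3$ with integer $r \geq 2$, then choosing any prime $p \mid r$ produces the nontrivial 3-term arithmetic progression $v_p(a), v_p(a)+v_p(r), v_p(a)+2v_p(r)$ inside $B_p$, contradicting the AP-free hypothesis. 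The density of $A_0$ is then the Euler product $\prod_p (1-1/p) \sum_{k \in B_p} p^{-k}$ by independence of $p$-adic valuations, and I would optimize $B_p$ at each prime to maximize the contribution $(1-1/p)\sum_{k \in B_p} p^{-k}$.

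This pure product construction, however, is automatically also \emph{rational}-ratio geometric-progression-free (since 3-AP-freeness in $\mathbb{Z}_{\geq 0}$ rules out APs with either sign of common difference), so its density is bounded above by $\overline{\alpha}$, which the later sections of the paper show to be below $0.773$. To push past $0.815509$ one must genuinely exploit the restriction to integer ratios. My plan is to enlarge $A_0$ to a set $A \supset A_0$ by adjoining integers $n \notin A_0$ whose inclusion together with $A_0$ can only create progressions of \emph{non-integer} rational ratio. A $v_p$-based criterion should single these out: roughly, the integers $n$ whose valuation pattern violates $B_p$ only at shapes that force a denominator in the ratio. One then iterates (or characterizes a single-pass sufficient condition) to ensure that the additions do not combine with one another to produce an integer-ratio progression.

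The main obstacle will be the correctness of this enlargement: one must give a precise rule for the additions, show that the resulting $A$ contains no integer-ratio 3-term GP (both between the added integers and $A_0$, and among the added integers themselves, which requires a careful case analysis on the prime-power decomposition of the hypothetical ratio $r$), and then compute $\overline{d}(A)$ as a refined Euler product incorporating the extra elements. Once the construction is pinned down, the density estimate $\overline{d}(A) > 0.815509$ should reduce to a numerical evaluation of this product (with explicit error control from truncating the primes and the sets $B_p$), yielding $\overline{\beta} > 0.815509$.
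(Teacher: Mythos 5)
Your proposal has a genuine gap at its central step. The pure product set $A_0$ is indeed free of integer-ratio (in fact rational-ratio) geometric progressions, but, as you yourself note, its density is then capped by $\overline{\alpha}<0.773$, and the optimal Rankin-type choice of exponent sets gives only about $0.7197$. Everything therefore hinges on the proposed enlargement $A\supset A_0$, for which you give no precise rule, no proof of integer-ratio GP-freeness, and no density computation; the assertion that such an enlargement can push past $0.815509$ is exactly the content that needs to be proved and is left unsubstantiated. In fact the paper carries out precisely this kind of valuation-pattern exchange (exclude integers of the form $2\cdot 3\cdot 5k$ so as to admit $2^2\cdot 3\cdot 5k$, $2\cdot 3^2\cdot 5k$, etc.), and it yields only $\beta>0.72195$ (Theorem \ref{betalow}) --- nowhere near $0.8155$, which is within $0.004$ of the upper bound $\overline{\beta}<0.819222$ proved later. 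Moreover, any construction of your multiplicative type is scale-invariant and has an asymptotic density, so you would implicitly be proving $\beta\geq 0.815509$, a far stronger statement than the theorem requires, one the paper cannot establish and which its Section \ref{density} and open questions suggest is doubtful.

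The missing idea is to exploit \emph{upper} density by concentrating the set near a sparse sequence of scales rather than spreading it multiplicatively. The paper's proof starts from the observation that $\left(\frac{N}{4},N\right]$ is integer-ratio GP-free, since $nr^2>N$ whenever $n>\frac{N}{4}$ and $r\geq 2$; it refines this to the union of intervals
$\mathbb{S}_N=\left(\frac{N}{48},\frac{N}{45}\right]\cup\left(\frac{N}{40},\frac{N}{36}\right]\cup\left(\frac{N}{32},\frac{N}{27}\right]\cup\left(\frac{N}{24},\frac{N}{12}\right]\cup\left(\frac{N}{9},\frac{N}{8}\right]\cup\left(\frac{N}{4},N\right]$,
which is still integer-ratio GP-free and contains $3523N/4320>0.815509\,N$ integers up to $N$; and it then stitches together copies $\mathbb{S}_{N_i}$ at scales growing fast enough (e.g.\ $N_i=48^2N_{i-1}^2/N_1$) that no progression can have two terms at one scale and a third at another. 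The union then has upper density greater than $0.815509$ with no multiplicative structure at all. Without some such interval-and-sparse-scales mechanism (or a genuinely new idea), your valuation-based plan cannot reach the claimed bound.
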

\begin{proof} Note that for any $N$, the set $\left(\frac{N}{4},N\right]$ is free of geometric progressions (with integer ratio) since for any $n \in \left(\frac{N}{4},N\right]$ and $r\geq 2$, we have $nr^2 >N$.  This observation, combined with the argument described below, could be used to construct a set with upper density $\frac{3}{4}$.  We can, however, do better.  

Rather than just using the range $\left(\frac{N}{4},N\right]$, we note that the set $\left(\frac{N}{9},\frac{N}{8}\right] \cup \left(\frac{N}{4},N\right]$ also has the property of being free of geometric progressions since for any $n \in \left(\frac{N}{9},\frac{N}{8}\right]$, the integer $2n$ lies in the omitted interval $\left(\frac{N}{8},\frac{N}{4}\right]$, hence $n$ is not part of a geometric progression with common ratio 2, and $9n >N$, meaning $n$ cannot be part of a progression of common ratio greater than or equal to 3.  One can further check that the set 
\[ \mathbb{S}_N = \left(\frac{N}{48},\frac{N}{45}\right] \bigcup \left(\frac{N}{40},\frac{N}{36}\right] \bigcup \left(\frac{N}{32},\frac{N}{27}\right] \bigcup \left(\frac{N}{24},\frac{N}{12}\right] \bigcup \left(\frac{N}{9},\frac{N}{8}\right] \bigcup \left(\frac{N}{4},N\right]\]
has this property, and contains $3523N/4320 > 0.815509N$ integers less than N.  (We can continue this process, adding smaller and smaller intervals to $\mathbb{S}_N$ indefinitely, and create sets with marginally greater density. However, this set is remarkably close to being optimal, we can't take another such interval until $N/2208.$)

Now, fix $N = N_1$, and let $N_2 = 48^2 N_1$.  The set $\mathbb{S}_{N_1} \cup \mathbb{S}_{N_2}$ will also be free of geometric progressions with integer ratio, since if $n,nr \in \mathbb{S}_{N_1}$ then $r<48$ so $nr^2 < 48 N_1 = N_2/48$, and so $nr^2 \not \in \mathbb{S}_{N_2}$. Similarly, if $nr,nr^2 \in \mathbb{S}_{N_2}$, we again have $r<48$ and so 
\[n>\frac{nr}{48}>\frac{N_2}{48^2} = N_1\] thus $n \not \in \mathbb{S}_{N_1}$.  In general, if we set \[N_i = \frac{48^2 N_{i-1}^2}{N_1}\] then no geometric progression with two elements contained in $\mathbb{S}_{N_i}$ will also have an element in the union of the $\mathbb{S}_{N_j}$ with $j <i$, nor vice versa.   
Thus, letting $\mathcal{S}$ be the union of all such $\mathbb{S}_{N_i}$, we find that $\bar{d}(\mathcal{S})>0.8155$, and the entire set $\mathcal{S}$ is free of geometric progressions with integer ratio.    
\end{proof}

We are also able to construct sets free of geometric progressions with integer ratio with a (slightly) higher asymptotic density than the set generated by the greedy algorithm, described by Rankin.  For convenience we recall the proof of Theorem \ref{rankinconstruction}, which constructs Rankin's set, $G^*_3$.
\begin{proof}
Note that if $(a,ak,ak^2)$ is a geometric progression and we denote by $v_p(a)$ the $p$-adic valuation of $a$, then $(v_p(a),v_p(ak),v_p(ak^2))$ forms an arithmetic progression (which is non-trivial if $v_p(k) \neq 0$).  Thus, any set of integers, all of whose prime factors occur with exponent contained in a set $A$ free of arithmetic progressions, will be free of geometric progressions \cite[Theorem 1]{BG}. Take $A= A^*_3=\{0,1,3,4,9,\cdots\}$ to be the set of integers which do not have a digit two in their ternary expansions.  This is the set obtained by choosing integers free of arithmetic progressions using a greedy algorithm. Now, letting $G^*_3 = \{n \in \mathbb{N} : \text{for all primes }p, v_p(n) \in A^*_3 \}$,  we obtain a set free of geometric progressions.  (Note that this set is also the set obtained using a greedy algorithm to choose integers free of geometric progressions, either of integer or rational ratio.)

The density of this set, $G^*_3$, can be found using an Euler product.  The probability that a given integer is divisible by an acceptable power of the prime $p$ is given by 
\begin{align*}\left(\frac{p-1}{p}\right)\left(\sum_{i \in A^*_3}\frac{1}{p^i}\right) &= \left(\frac{p-1}{p}\right)\left(1+\frac{1}{p}\right)\prod_{i>0}\left(1+\frac{1}{p^{3^i}}\right)\\
&=\left(1-\frac{1}{p^2}\right)\prod_{i>0}\left(1+\frac{1}{p^{3^i}}\right)\end{align*}
and since divisibility by different primes is independent,
\[d(G^*_3) =  \prod_p \left(\left({1-\frac{1}{p^2}}\right)\prod_{i>1}\left({1+\frac{1}{p^{3^i}}}\right)\right) =  \frac{1}{\zeta(2)}\prod_{i>0}\frac{\zeta(3^i)}{\zeta(2\cdot 3^i)}>0.7197. \qedhere\]

\end{proof}

So, for example, considering just the primes 2, 3 and 5, Rankin's construction would include numbers of the form $2{\cdot}3{\cdot}5k$, where $k$ is an integer divisible only by primes larger than 5, and not to any powers that it would cause it to be otherwise excluded from the set. If we exclude these integers instead, however, we would be able to include numbers of the following forms:
\[2^2{\cdot}3{\cdot}5k, \hspace{1.5mm} 2{\cdot}3^2{\cdot}5k, \hspace{1.5mm} 2{\cdot}3{\cdot}5^2k, \hspace{1.5mm} 2^2{\cdot}3^2{\cdot}5k, \hspace{1.5mm} 2{\cdot}3^2{\cdot}5^2k, \hspace{1.5mm} 2^2{\cdot}3{\cdot}5^2k \hspace{0.5mm} \text{ and } \hspace{0.5mm}2^2{\cdot}3^2{\cdot}5^2k.\]
Each of these new numbers we've included will force us to exclude numbers with these primes to higher powers, but in the end (calculating these inclusion/exclusions with a computer) we find that we gain about 0.0022 asymptotically in the trade, producing a new set with asymptotic density 0.72195.  This proves a new lower bound for the constant $\beta$, defined in Section \ref{background}.
\begin{theorem} \label{betalow} We have the lower bound $\beta > 0.72195$.
\end{theorem}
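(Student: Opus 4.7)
The plan is to modify Rankin's set $G^*_3$ at the primes $2, 3, 5$: keep the constraint $v_p(n) \in A^*_3$ for every prime $p > 5$, but at the small primes specify the allowed exponent triple $(v_2(n), v_3(n), v_5(n))$ as belonging to some set $T \subset \mathbb{N}^3$ other than $(A^*_3)^3$.

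Freeness from integer-ratio geometric progressions of the resulting set $A$ reduces, by the same exponent analysis used in the proof of Theorem \ref{rankinconstruction}, to two conditions: $A^*_3$ is arithmetic-progression-free (already known); and $T$ contains no three triples forming a non-trivial arithmetic progression with non-negative integer common difference. Indeed, a geometric progression $(n, nr, nr^2)$ with $r > 1$ induces a non-trivial AP in the $p$-adic exponents at every prime dividing $r$. Since $A^*_3$ is AP-free, $r$ must be supported on $\{2,3,5\}$, and then the AP condition on $T$ rules out any such progression.

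To build $T$ I would follow the swap suggested in the paragraph preceding the theorem — remove the triple $(1,1,1)$ from Rankin's $(A^*_3)^3$ and insert all of $\{1,2\}^3 \setminus \{(1,1,1)\}$ — and then, by a computer search, remove further triples to destroy the new APs this swap creates (for instance $(0,0,0), (2,2,2), (4,4,4)$, or chains like $(2,1,1), (3,1,1), (4,1,1)$ once $(2,1,1)$ is added). The modifications only affect triples in slices with small $v_3, v_5$, and the total weight gained or lost in any single slice is a convergent sum, so $T$ can be described explicitly and its contribution to the density computed to arbitrary precision.

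The density is then given by the usual Euler product, with the small-prime contribution replaced by a weighted sum over $T$:
\[d(A) = \frac{4}{15}\left(\sum_{(i,j,k) \in T} \frac{1}{2^i 3^j 5^k}\right) \prod_{p>5}\left[\left(1 - \frac{1}{p^2}\right)\prod_{m>0}\left(1 + \frac{1}{p^{3^m}}\right)\right].\]
The main obstacle is the bookkeeping: one must list precisely which triples are added, determine those forced out by the AP condition, and verify by a rigorous numerical computation that the net correction to the small-prime factor is positive and pushes the density strictly above $0.72195$. The gain of roughly $0.0022$ claimed in the preceding paragraph should then fall out of the numerics.
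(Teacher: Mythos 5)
Your proposal is essentially the paper's own argument: the theorem is proved by exactly the swap described in the preceding paragraph, i.e.\ dropping the exponent triple $(1,1,1)$ at the primes $2,3,5$, adding the triples in $\{1,2\}^3\setminus\{(1,1,1)\}$, letting a computer handle the forced exclusions at higher powers, and evaluating the density via the same Euler product with the small-prime factor replaced by a weighted sum over the allowed triples. Your reduction to the condition that the triple set contain no nontrivial arithmetic progression with nonnegative common difference, and your factor $\frac{4}{15}\sum_{(i,j,k)\in T}2^{-i}3^{-j}5^{-k}$, match the paper's (computer-assisted) computation giving the gain of about $0.0022$ over Rankin's density.
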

 This can be further improved by incorporating more primes and exclusions.  Note, however, that this larger set does include progressions with rational ratios.  For example, progressions of the form $(2{\cdot}3^2 {\cdot}5k, 2^2 {\cdot}3 {\cdot}5k, 2^3 {\cdot}5k)$ are included even though they form a progression with common ratio $\frac{2}{3}$.

\section{Upper Bounds} \label{bounds}

Upper bounds for the densities of sets of integers free of geometric progressions have been studied in several papers, with the current best bound being that of Nathanson and O'Bryant that the upper density of any geometric progression free set is at most 0.84948. (Riddell \cite{Riddell} gives the upper bound 0.8399, but states that $\lq\lq$The details are too lengthy to be included here.") In this section we improve this bound to 0.772059.

We consider first the problem of avoiding geometric progressions with ratios involving only a finite set of primes, in particular the primes smaller than some bound $s$.  Denote by $g_s(N)$ the cardinality of the largest subset of the integers $\{1,\cdots, N\}$ which is free of 3-term rational geometric progressions which have common ratio involving only the primes less than or equal to $s$.  We will see that for any $s$ the limit $\lim_{N \to \infty} \frac{g_s(N)}{N}$ exists, which we will denote by $\overline{\alpha_s}$.  We first consider the specific case of just the primes 2 and 3.

\begin{theorem} \label{23bound} The limit $\overline{\alpha_3} =\lim_{N \to \infty} \frac{g_3(N)}{N}$ exists and is bounded by \[0.790470 < \overline{\alpha_3} < 0.791266.\] 
\end{theorem}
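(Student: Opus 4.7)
The plan is to exploit the unique factorization $n = 2^a 3^b m$ with $\gcd(m, 6) = 1$ and $a, b \geq 0$. A three-term geometric progression with common ratio $r = 2^s 3^t$ for integers $s, t$ (not both zero) forces all three terms to share the same 6-coprime part $m$, with exponent pairs $(a_i, b_i) \in \Z_{\geq 0}^2$ forming a three-term arithmetic progression in the plane. Hence a set $A \subseteq \{1,\ldots,N\}$ avoids such geometric progressions if and only if, for each $m$ coprime to $6$, the subset $\{(a,b) : 2^a 3^b m \in A\}$ is 3-AP-free in the triangular region $T_L := \{(a,b) \in \Z_{\geq 0}^2 : a \log 2 + b \log 3 \leq L\}$ with $L = \log(N/m)$. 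Writing $f(L)$ for the maximum cardinality of such a 3-AP-free subset and $h(L) := |T_L|$, one obtains the decomposition $g_3(N) = \sum_{(m,6)=1} f(\log(N/m))$, while trivially $N = \sum_{(m,6)=1} h(\log(N/m))$.

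To establish existence of the limit, I would recast $g_3(N)/N$ probabilistically. For $n$ drawn uniformly from $\{1,\ldots,N\}$ with 6-coprime part $m_n$, one has $g_3(N)/N = E[\rho(L_{m_n})]$, where $\rho(L) := f(L)/h(L)$ and $L_{m_n} = \log(N/n) + v_2(n)\log 2 + v_3(n)\log 3$. As $N \to \infty$, the triple $(v_2(n), v_3(n), \log(N/n))$ converges jointly in distribution to independent random variables $(V_2, V_3, X)$ with $P(V_p = k) = (1 - 1/p)p^{-k}$ and $X \sim \mathrm{Exp}(1)$; an elementary counting argument gives this with the correct joint density. Hence $L_{m_n}$ converges to $L := X + V_2 \log 2 + V_3 \log 3$, whose law is absolutely continuous. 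Since $\rho$ is a bounded step function whose discontinuities lie on a countable (hence $L$-null) set, the continuous mapping theorem together with bounded convergence yields $\overline{\alpha_3} = E[\rho(L)]$.

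Evaluating $E[\rho(L)]$ to the required precision is then a finite computation. The function $\rho$ is piecewise constant with breakpoints at $L = a \log 2 + b \log 3$, so I would compute $\rho$ exactly at each breakpoint up to some threshold $L_0$ via a branch-and-bound search (or integer program) for the maximum 3-AP-free subset of $T_L$, and then integrate $\rho$ against the explicit density of $L$ on $[0, L_0]$. The tail contribution $E[\rho(L)\mathbf{1}_{L > L_0}]$ lies in $[0, P(L > L_0)]$, and $P(L > L_0)$ decays exponentially in $L_0$ thanks to the $X$ component; choosing $L_0$ around $7$--$8$ gives a tail bound of order $10^{-3}$, comparable to the gap $0.791266 - 0.790470 \approx 8 \times 10^{-4}$ between the bounds. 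The upper bound uses $\rho \leq 1$ on the tail; the lower bound uses $\rho \geq 0$ there, sharpened by an explicit construction of a large 3-AP-free subset of a slightly larger triangle. The main obstacle is the combinatorial search for $f(L)$ on triangles containing up to roughly $40$--$50$ lattice points, where the maximum 3-AP-free subset problem is NP-hard in general but tractable through effective pruning; a secondary point is to verify that the finite threshold $L_0$ is chosen large enough that rounding in the numerical integration does not swamp the stated precision.
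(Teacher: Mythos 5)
Your core reduction is the same as the paper's: factor each $n\le N$ as $2^a3^bm$ with $(m,6)=1$, observe that a progression with ratio $2^s3^t$ forces equal coprime parts, and thereby reduce to finding, for each $m$, the largest subset of the $3$-smooth numbers up to $N/m$ with no $3$-term geometric progression (your $3$-AP-free subsets of the exponent triangle $T_L$ are exactly these, after taking logarithms). Like the paper, you must determine these maxima by explicit search; the paper's Table~1 records precisely the thresholds $4,9,16,18,\dots,5832$ at which the minimal number of exclusions increases, i.e.\ the breakpoints of your $\rho$. Your probabilistic argument for the existence of the limit (weak convergence of $\log(N/m_n)$ to $X+V_2\log 2+V_3\log 3$ and $\overline{\alpha_3}=E[\rho(L)]$) is a clean and correct repackaging of the paper's series formula $\overline{\alpha_3}=1-\frac13\sum_{j\in I_3}1/n_j$ from Theorem~\ref{alphas}, which is obtained there by counting one excluded integer per coprime class per threshold; the two formulations agree term by term.

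The genuine gap is in your tail treatment, and it is quantitative but fatal to the stated constants. Bounding $\rho$ between $0$ and $1$ on $\{L>L_0\}$ costs the full mass $P(L>L_0)$, and even at $L_0=\log 5832\approx 8.67$ (the largest threshold actually computable; the triangle then has $60$ lattice points, somewhat more than your estimate) one has $P(L>L_0)\approx \frac{60}{3}\cdot\frac{1}{5832}+\frac13\sum_{n>5832,\ n\ 3\text{-smooth}}\frac1n\approx 4\times 10^{-3}$, several times the width $8\times 10^{-4}$ of the claimed interval; at your suggested $L_0\approx 7$--$8$ it is larger still. As written you would certify only roughly $0.788<\overline{\alpha_3}<0.792$, missing both stated bounds. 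The repair, which is what the paper does, is to exploit monotonicity of the exclusion count on the tail on both sides: for the upper bound, each of the $36$ computed thresholds $n_j\le 5832$ forces an exclusion in \emph{every} triangle with $e^{\ell}\ge n_j$, so $E[1-\rho(L)]\ge\frac13\sum_j 1/n_j>0.208734$ with no tail correction needed; for the lower bound, each $3$-smooth number beyond $5832$ can force at most one additional exclusion per coprime class (concretely, build the set by simply omitting $bn$ for every $3$-smooth $n>5832$), so the loss beyond the computed range is at most $\frac13\sum_{n>5832,\ n\ 3\text{-smooth}}1/n<0.000795$, giving $0.791266-0.000795>0.790470$. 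Your phrase ``sharpened by an explicit construction on a slightly larger triangle'' needs to be replaced by exactly this one-exclusion-per-new-point bound; merely increasing $L_0$ is neither feasible (the hitting-set searches grow exponentially) nor necessary.
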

\begin{proof}
Fix $N>0$ and consider the largest subset of the integers $\{1,\cdots, N\}$ free of geometric progressions which have a common ratio involving only the primes 2 and 3.  Denote by $S^3_k$ the set of $3$-smooth numbers (numbers whose only prime divisors are 2 and 3) at most $k$.  Note first that any geometric progression free subset of $S^3_4 = \{1,2,3,4\}$  must exclude at least one integer from this set, and hence for any integer $b\leq \frac{N}{4}$ such that $(b,6)=1$ our set must exclude at least one of the integers $b,2b,3b,4b$.  (The single one excluded cannot be $3b$.)  Since these numbers are distinct for different values of $b$, we must exclude a total of at least $\frac{1}{3}\left(\frac{N}{4}\right) + O(1)$ integers.  

If we now consider $S^3_9 = \{1,2,3,4,6,8,9\}$, we find that this set contains the 4 progressions $(1,2,4),(2,4,8),(1,3,9)$ and $(4,6,9)$ which cannot all be precluded by removing any single number.  However, removing the two integers $2$ and $9$ suffices.  This means that for each $b\leq \frac{N}{9}$, $(b,6)=1$, we must exclude at least two of the integers from the set $\{b,2b,3b,4b,6b,8b,9b\}$, and moreover these sets are disjoint, not only from each other, but also simply extend the sets constructed from $S^3_4$ above.  Thus each $b\leq \frac{N}{9}$ with $(b,6)=1$ corresponds to an additional excluded integer, meaning we must now exclude at least $\frac{1}{3}\left(\frac{N}{4} +\frac{N}{9}\right)+O(1)$ integers.  In general, each time the largest geometric-progression free subset of $S^3_k$ requires an additional exclusion, there are an additional $\frac{N}{3k}$ integers which must be excluded from our set.  One can check computationally that the first few values of $k$ which require an additional exclusion are given in the following table.
\begin{table}[H]
\caption{}
\begin{center}
\begin{tabular}{|c|c|c|c|c|c|c|c|} 
\hline 
$k$ & \# of integers & & $k$ & \# of integers& & $k$ &\# of integers\\
& excluded from $S^3_k$ & & & excluded from $S^3_k$& & & excluded from $S^3_k$\\ 
\hline 
4 & 1 & &243 & 13 & & 1458 &25\\
9 & 2 & &256 & 14 & & 1728 &26\\
16 & 3 & &288 & 15 & & 1944 &27\\ 
18 & 4 & &384 & 16 & & 2048 &28\\ 
32 & 5 & &486 & 17 & & 2304 &29\\
36 & 6 & &512 & 18 & & 2592 &30\\ 
64 & 7 & &576 & 19 & & 3072 &31\\
81 & 8 & &729 & 20 & & 3888 &32\\
96 & 9 & &864 & 21 & & 4096 &33\\
128& 10& &972 & 22 & & 4374 &34\\
144& 11& &1024& 23 & & 5184 &35\\
192& 12& &1296& 24 & & 5832 &36\\ 
\hline  
\end{tabular} 
\end{center}
\end{table}
Taking all these exclusions into account, we find that we've excluded 
\[\frac{N}{3}\left(\frac{1}{4}{+}\frac{1}{9}{+}\frac{1}{16}{+}\frac{1}{18}{+}\frac{1}{32}{+}\cdots{+}\frac{1}{5832}\right) +O(1)\] 
numbers.  Since $\frac{1}{3}\left(\frac{1}{4}{+}\frac{1}{9}{+}\cdots{+}\frac{1}{5832}\right) > 0.208734$, we have that any subset of $\{1,\cdots,N\}$ free of geometric progressions with 3-smooth ratios has size at most $0.791266N$ for $N$ sufficiently large.

Note that the process described above is also constructive: For a fixed integer $N$  we can take for each $b \leq N$, $(b,6)=1$ the set of integers not excluded above (also excluding multiples by 3-smooth numbers that we have not yet taken into account) and obtain a subset of the integers up to $N$ free of progressions involving the primes 2, and 3.  This set we construct will differ in size from our upper bound only by the trailing terms in the series of 3-smooth numbers that we have not yet taken into account, 
\[\sum_{\substack{n>5832 \\ n \text{ is 3-smooth}}}\frac{1}{3n} < 0.000795, \]
and so can be taken to be at least $0.790470N$. 
\end{proof}

We can actually take this further.  Using the methods of Theorem \ref{intup} we can extend the  construction above to a set of integers which has this upper density while avoiding 2,3-rational progressions.  This gives us a lower bound for the supremum of the upper densities of all sets that avoid rational progressions involving only the primes 2 and 3.  Since the upper and lower bounds in this argument differ only by the trailing terms in the series of reciprocals of 3-smooth numbers, the series we are computing will converge to the actual supremum over the upper densities of all sets avoiding such progressions.  

There is nothing particular about the primes 2 and 3 in this argument.  In general, write the sequence of $s$-smooth numbers
$ 1 = n_1 < n_2 < \cdots $ in increasing order.  For each $j$ let $m_j$ denote the size of the largest subset of $S^s_{n_j} = \{n_1,n_2,\cdots,n_j\}$ which has no triples in geometric progression.  Note that the $m_j$ are nondecreasing.  Let $I_s$ be the set of numbers $j$ with $m_j = m_{j-1}$ (those $j$ for which $S^s_{n_j}$ requires an additional exclusion).  For $s=3$ these are the numbers appearing in Table 1.  We have the following result.
\begin{theorem} \label{alphas} For each integer $s\geq 2$, 
\[\overline{\alpha_s} = 1-\left(\prod_{p\leq s} \frac{p-1}{p}\right)  \sum_{j\in I_s}  \frac{1}{n_j}.\] Furthermore,
\[\overline{\alpha_s} = \sup\{\bar{d}(A):A \subset \N \text{ is free of }s\text{-smooth rational geometric progressions}\}.\]
\end{theorem}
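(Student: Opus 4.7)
The plan is to decompose $\N$ multiplicatively and exploit the fact that $s$-smooth GPs respect this decomposition. Every positive integer $n$ factors uniquely as $n = bm$ with $m \in S^s$ and $b$ coprime to $\pi_s := \prod_{p \leq s} p$, giving $\N = \bigsqcup_{(b,\pi_s)=1} b \cdot S^s$. Any 3-term geometric progression $(a, ar, ar^2)$ whose common ratio $r$ is $s$-smooth lies inside a single class, since multiplying by $r$ does not alter the coprime-to-$\pi_s$ part of each term. Thus the problem of finding the largest $s$-smooth-GP-free subset of $\{1,\ldots,N\}$ decouples over the coprime classes, and within each class it reduces to the original combinatorial problem on $s$-smooth integers.

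For the explicit formula, set $K(b) := \max\{j : b n_j \leq N\}$, so that the elements of $b \cdot S^s$ up to $N$ correspond to $S^s_{n_{K(b)}}$. The largest $s$-smooth-GP-free subset of this class has size $m_{K(b)}$, and since the per-class choices are independent we obtain $g_s(N) = \sum_{(b,\pi_s)=1} m_{K(b)}$. Writing $m_{K(b)} = K(b) - |I_s \cap [1, K(b)]|$ and using the identity $\sum_{(b,\pi_s)=1} K(b) = N$ (every integer $\leq N$ appears exactly once as some $bn_j$), a switch of summation gives
\[\sum_{(b,\pi_s)=1} |I_s \cap [1, K(b)]| = \sum_{j \in I_s} \#\{b \leq N/n_j : (b,\pi_s) = 1\} = C_s N \sum_{j \in I_s} \frac{1}{n_j} + o(N),\]
where $C_s := \prod_{p \leq s}(p-1)/p$; the tail beyond $N$ is negligible because $\sum_{m \in S^s} 1/m = 1/C_s$ converges and the number of summands is only $O((\log N)^{\pi(s)})$. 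Dividing by $N$ yields $g_s(N)/N \to 1 - C_s \sum_{j \in I_s} 1/n_j$, proving both existence of the limit $\overline{\alpha_s}$ and the claimed closed form.

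The inequality $\sup \bar{d}(A) \leq \overline{\alpha_s}$ is immediate from $|A \cap \{1,\ldots,N\}| \leq g_s(N)$. For the matching lower bound, I would construct a single set $A$ with $\bar{d}(A) = \overline{\alpha_s}$ via the stacking technique of Theorem \ref{intup}. Fix a super-exponential sequence such as $N_i = 2^{3^i}$, let $B_i$ be a maximum $s$-smooth-GP-free subset of $\{1,\ldots,N_i\}$, place guards $G_i := \lfloor \sqrt{N_{i-1} N_i}\rfloor + 1$, and set $A := \bigcup_{i \geq 1} (B_i \cap (G_i, N_i])$. Since $G_i = o(N_i)$, each piece $A_i := B_i \cap (G_i, N_i]$ satisfies $|A_i| \geq |B_i| - G_i = (\overline{\alpha_s} - o(1))N_i$, so $\bar{d}(A) \geq \overline{\alpha_s}$ along $N = N_i$. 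The main obstacle is ruling out $s$-smooth GPs that span multiple pieces. A splitting of $(a,ar,ar^2)$ with two terms in $A_i$ and one in $A_j$ ($i<j$) forces $ar^2 \leq N_i^2/G_i$, which falls below $G_j$ once $G_j G_i \geq N_i^2$; the splits with one term in $A_i$ and two in $A_j$, or one term each in three distinct pieces, yield the analogous requirements $G_j^2 > N_{j-1} N_j$ and $G_k G_i > N_j^2$ for $i<j<k$. All three are guaranteed by $N_i = 2^{3^i}$, since the exponents $c_i = 3^i$ satisfy $c_{i-1}+c_{i+1} > 2 c_i$ and $c_{j-2}+c_{j-1}+c_{j+1} > 3 c_j$ with room to spare. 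A small amount of extra care is needed because $s$-smooth rational ratios may be arbitrarily close to $1$, but in each case the lower bound $a > G_i$ forced by membership in the $i$-th piece reduces the analysis to the same growth condition.
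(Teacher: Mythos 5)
Your proof is correct and follows essentially the same route as the paper: you decompose $\{1,\ldots,N\}$ into the classes $b\cdot S^s$ with $(b,\prod_{p\leq s}p)=1$, observe that $s$-smooth-ratio progressions stay within a class, and count one extra exclusion per class for each $j\in I_s$ with $bn_j\leq N$, which is exactly the argument of Theorem \ref{23bound} generalized; for the supremum statement you use the same stitching idea as Theorem \ref{intup}. Your treatment of the cross-scale progressions (guard zones $G_i$ and the super-exponential scales $N_i=2^{3^i}$) is a carefully quantified version of what the paper only sketches by citing "the methods of Theorem \ref{intup}," and it holds up under checking.
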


Since any geometric-progression-free subset of the integers must, in particular, be free of ratios involving only the primes 2 and 3, we see that the upper bound above, 0.791266, is also an upper bound for $\overline{\alpha}$.  Already this value is better than the bounds given before in the literature, but we can improve this result further. 

If we consider now the primes 2,3 and 5, we see that the proof above goes through in exactly the same way, requiring additional exclusions at each of the integers
\begin{equation}\label{235ex} 
\begin{tabular}{r r r r r r r r r r r r r r r r r r}
4 & 9 & 16 & 18 & 20 & 25 & 32 & 36 & 50 & 60 & 64 & 75 & 80\\
96 & 100 & 108 & 128 & 144 & 150 & 160 & 192 & 200 & 225 & 240 & 243 & 256\\
300 & 320 & 324 & 384 & 400 & 432 &	480 & 500 & 512 & 540 & & & 
\end{tabular} \\
\end{equation} 
which are the first 36 exclusions required. This list gives us the bounds $0.766512 <  \overline{\alpha_5} < 0.782571$.  The difficulty in pushing this method further is the amount of computation required to find the largest geometric-progression-free subset of $S^5_k$ (the 5-smooth numbers up to $k$) for increasingly larger $k$.  For example, showing that 576 (the next 5-smooth number after 540) requires an additional exclusion would require showing that there are no geometric-progression-free subsets of size 36 among the 70 5-smooth numbers up to 576.

Even though computational limitations prevent us from finding the exact values where additional exclusions are necessary past 540 in $S^5_k$ we can still use some of the computational work we did to estimate $\overline{\alpha_3}$ to further improve the upper bound on $\overline{\alpha_5}$.  Just by considering the 2,3-ratios among the 5-smooth numbers, we see that each time we multiply the numbers from Table 1 (the integers where an additional exclusion was required in the 3-smooth case) by successive powers of 5 we obtain a list of integers which are each an upper bound for when an additional exclusion must be made in the 2,3,5-ratio case.  So, for example, multiplying each of $4, 9, 16, 18, 32 \ldots 5832$ by $1, 5, 25, 125 \ldots$ and reordering we obtain the list:  
\begin{equation} \label{235excalc}
\begin{tabular}{r r r r r r r r r r r r r r r r r r}
4 & 9 & 16 & 18 & 20 & 32 & 36 & 45 & 64 & 80 & 81 & 90 & 96\\
100 & 128 & 144 & 160 & 180 & 192 & 225 & 243 & 256 & 288 & 320 & 384 & 400\\
405 & 450 & 480 & 486 & 500 & 512 & 576 & 640 & 720 & 729 & 800 & 864 & 900 &\ldots
\end{tabular} \\
\end{equation}
Note that each term in this list \eqref{235excalc} is greater than or equal to the corresponding term in \eqref{235ex}.  Looking at the 37th entry of this table we see that we will require an additional exclusion by the time we reach 800.  So, taking all of these exclusions into account (first the 36 exclusions from \eqref{235ex}, and then those starting at 800 from \eqref{235excalc})  we can decrease our bound by an additional 0.006815, so $\overline{\alpha_5} < 0.775755$. Applying this process again for the primes 2,3,5 and 7, where we compute that exclusions must be made at 
\begin{equation}
\begin{tabular}{r r r r r r r r r r r r r r r r r}
4 & 9 & 16 & 18 & 20 & 25 & 28 & 32 & 36 & 49 & 50 & 60 & 64\\
72 & 75 & 81 & 96 & 98 & 100 & 108 & 112 & 126 & 128 & 144 & 147 & 150
\end{tabular} \\
\end{equation}
and incorporating the exclusions calculated for both 2,3 and for 2,3,5, as described above, we obtain the bound $\overline{\alpha_7} < 0.772059$. Again, this is also an upper bound for the upper density of a set of integers avoiding all geometric progressions which proves the following.

\begin{theorem} We have $\overline{\alpha} < 0.772059$.
\end{theorem}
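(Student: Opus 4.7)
The plan is to reduce the claim to a quantitative estimate on $\overline{\alpha_7}$. Since any geometric-progression-free set of positive integers is, in particular, free of progressions whose common ratio is $7$-smooth, we have $\overline{\alpha} \leq \overline{\alpha_7}$. Applying Theorem \ref{alphas} with $s=7$ gives
\[\overline{\alpha_7} \;=\; 1 - \left(\prod_{p\leq 7}\frac{p-1}{p}\right)\sum_{j\in I_7}\frac{1}{n_j} \;=\; 1 - \frac{8}{35}\sum_{j \in I_7}\frac{1}{n_j},\]
so it suffices to lower-bound $\sum_{j\in I_7} 1/n_j$ by something strictly larger than $\tfrac{35}{8}(1-0.772059) \approx 0.99724$.

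First I would determine an initial segment of $I_7$ exactly. For each $7$-smooth $n_j \leq 150$ one solves by direct search the combinatorial problem of finding a maximum three-term-geometric-progression-free subset of $S^7_{n_j}$; this recovers the $26$ exclusion points $4, 9, 16, 18, 20, 25, 28, 32, 36, 49, \ldots, 150$ listed above, and contributes their exact reciprocal sum.

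For $n_j > 150$, where exhaustive search quickly becomes infeasible, I would apply the comparison principle already used above in passing from $\overline{\alpha_3}$ to $\overline{\alpha_5}$: if $k$ is an exclusion point of $I_{s'}$ for $s' \leq s$, then for every $s$-smooth integer $m$ coprime to the $s'$-smooth primes, $km$ lies at or beyond some element of $I_s$, because the scaled copy $m \cdot S^{s'}_{k}$ embeds in $S^s_{km}$ as a geometrically isomorphic block that requires its own exclusion. Merging three lists---the $26$ exact initial entries of $I_7$ up to $150$; the entries of Table~1 multiplied by every product $5^a 7^b$; and the $36$ listed entries of $I_5$ multiplied by every power $7^b$---then sorting and discarding duplicates, produces an increasing sequence whose $i$-th term majorizes the $i$-th element of $I_7$, so the sum of its reciprocals is a valid lower bound for the corresponding partial sum of $\sum_{j\in I_7} 1/n_j$.

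Finally, summing sufficiently many reciprocals of this merged sequence, multiplying by $8/35$, and subtracting from $1$ yields an upper bound on $\overline{\alpha_7}$ strictly below $0.772059$. The main obstacle is computational: the brute-force search for maximum progression-free subsets of $S^7_k$ scales very poorly in $k$, since the number of $7$-smooth integers up to $k$ grows polynomially in $\log k$ and the number of candidate subsets is exponential in that count. The comparison trick via the precomputed $I_3$ and $I_5$ data is what makes it feasible to capture enough of the tail of $\sum_{j \in I_7} 1/n_j$ without enumerating deep into the $7$-smooth regime.
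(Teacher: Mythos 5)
Your overall strategy is the paper's: reduce to $\overline{\alpha}\leq\overline{\alpha_7}$, compute the initial segment of $I_7$ exactly (the $26$ exclusion points up to $150$), and then bound the tail by comparison with the already-computed $3$-smooth and $5$-smooth exclusion data, exactly as the paper does in passing from $\overline{\alpha_3}$ to $\overline{\alpha_5}$ to $\overline{\alpha_7}$. However, your merging rule contains a genuine flaw: merging the exact entries of $I_7$, the set $\{k\cdot 5^a7^b : k\in I_3\}$, and the set $\{k\cdot 7^b : k\in I_5\}$ into one sequence and discarding only repeated \emph{values} does not produce a termwise majorant of $I_7$, because the same required exclusion can be counted twice through two different decompositions. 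Concretely, $45=9\cdot 5$ lies in your scaled Table~1 list but not in $I_5$ or in the exact initial segment of $I_7$, so your merged sequence has ten entries up to $45$ (namely $4,9,16,18,20,25,28,32,36,45$), whereas the true tenth element of $I_7$ is $49$: the two exclusions forced inside the block $5\cdot S^3_9=\{5,10,15,20,30,40,45\}$ are already absorbed into the exact count of eight exclusions for $S^5_{45}$ coming from the $I_5$ data, so counting $45$ again overcounts. Since you are trying to \emph{lower}-bound $\sum_{j\in I_7}1/n_j$, any such overcount pushes the computed quantity above what is justified and makes the resulting ``upper bound'' on $\overline{\alpha_7}$ smaller than what your argument actually proves; the final numerical claim would therefore not be established by this computation.

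The fix is to use, for each index $i$, exactly one bound on the $i$-th element of $I_7$, never adding counts from two decompositions whose blocks overlap. This is what the paper does for $\overline{\alpha_5}$: it takes the $36$ exact entries for positions $1$ through $36$, and for positions $i\geq 37$ it uses the $i$-th entry of the single list obtained from the disjoint-block decomposition $S^5=\bigcup_a 5^a\cdot S^3$ (Table~1 entries times powers of $5$), whose counting function is a valid lower bound for the number of exclusions in $S^5_x$ precisely because the blocks are disjoint. For $\overline{\alpha_7}$ the analogous valid majorant comes from the disjoint decomposition $S^7=\bigcup_b 7^b\cdot S^5$, where inside each block one uses the $5$-smooth exclusion schedule (the $36$ exact $I_5$ entries patched with the scaled Table~1 tail from $800$ on), scaled by $7^b$; one then uses the exact $I_7$ entries for the first $26$ positions and that single majorant list thereafter. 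With that correction your argument coincides with the paper's proof; your reduction $\overline{\alpha}\leq\overline{\alpha_7}$, the use of Theorem \ref{alphas} with $\prod_{p\leq 7}\frac{p-1}{p}=\frac{8}{35}$, and the target threshold $\frac{35}{8}(1-0.772059)$ are all correct.
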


Since this upper bound is lower than the upper density of the set we constructed for the integer-ratio problem in Theorem \ref{intup}, we see (as one might have expected) that these two problems, considering integer and rational ratios, are in fact different.  One can carry through an analogous argument considering only progressions with integer ratios, in which case we find (looking at 3-smooth integer progressions) that we must make exclusions at
\begin{center}
\begin{tabular}{r r r r r r r r r r r r r r r r r r}
4 & 9 & 18 & 32 & 48 & 64 & 96 & 128 & 144 & 192 & 256 & 288 & 384\\
432 & 512 & 648 & 864 & 972 & 1024 & 1296 & 1536 & 1944 & 2187 & 2304 & 2916 & 3456\\
4096 & 4608 & 5832 & 6144 & 6912 & 8748 & 9216
\end{tabular} \\
\end{center}
yielding an upper bound of 0.820555.  If we combine this argument, as in the rational case above, with the necessary exclusions for 5-smooth progressions, 
\begin{center}
\begin{tabular}{r r r r r r r r r r r r r r r r r}
4 & 9 & 18 & 20 & 32 & 40 & 48 & 64 & 80 & 96 & 100 & 128 & 144\\
160 & 192 & 200 & 240 & 256 & 288 & 320 & 384 & 400 & 432 & 400 & 432 & 480\\
500 & 512\\
\end{tabular} 
\end{center}
we can compute a new upper bound for $\overline{\beta}$ which is less than 0.004 above the lower bound of the set we constructed in section \ref{integer}. 
\begin{theorem}We have $0.815509 < \overline{\beta} < 0.819222$.
\end{theorem}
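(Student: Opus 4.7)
The plan is to adapt the framework of Theorems \ref{23bound} and \ref{alphas} to the integer-ratio setting, and then combine the resulting upper bound with the construction of Theorem \ref{intup}. First I would define $h_s(N)$ as the size of the largest subset of $\{1,\ldots,N\}$ avoiding three-term geometric progressions whose common ratio is an \emph{integer} built out of the primes $\leq s$, and set $\overline{\beta_s}=\lim_{N\to\infty}h_s(N)/N$. The existence of this limit and the monotone chain $\overline{\beta}\leq\overline{\beta_s}$ follow from exactly the same coset-partition argument as in the rational case: the integers up to $N$ decompose according to their $s$-rough part $b$, and the $s$-smooth multiples $\{bn:n\in S^s_{N/b}\}$ of each such $b$ form a scaled copy of an initial segment of the $s$-smooth numbers on which the integer-ratio-progression-free problem is identical to that on $S^s_{N/b}$ itself.

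Let $m_j^{\mathrm{int}}$ be the size of the largest integer-ratio-progression-free subset of $S^s_{n_j}$, and let $I_s^{\mathrm{int}}$ be the set of indices $j$ at which $m_j^{\mathrm{int}}$ strictly increases. Collecting exclusions across cosets by their thresholds gives the exact formula
\[
\overline{\beta_s}=1-\Bigl(\prod_{p\leq s}\frac{p-1}{p}\Bigr)\sum_{j\in I_s^{\mathrm{int}}}\frac{1}{n_j},
\]
paralleling Theorem \ref{alphas}. For $s=3$ I would justify by direct search on the hypergraph of integer-ratio progressions within $S^3_{n_j}$ the tabulated thresholds $4,9,18,32,48,\ldots,9216$; summing the resulting truncated series produces the intermediate bound $\overline{\beta_3}<0.820555$ stated in the excerpt.

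To sharpen this to $0.819222$, one replaces $s=3$ with $s=5$. The initial segment of $I_5^{\mathrm{int}}$ is computable directly and is given by $4,9,18,20,32,40,48,\ldots,512$. Beyond the range where exhaustive search is feasible I would dominate the remaining thresholds by the sorted multiset $\{5^e t:e\geq 0,\ t\in I_3^{\mathrm{int}}\}$. The dominance is valid because $5^e\cdot S^3_k$ embeds as a scaled copy inside $S^5_{5^e k}$, so a three-term 3-smooth integer-ratio progression in $S^3_k$ lifts under multiplication by $5^e$ to an integer-ratio progression in $S^5_{5^e k}$; therefore every exclusion forced in the 3-smooth problem at threshold $t$ forces an exclusion in the 5-smooth problem at threshold at most $5^e t$. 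Merging the computed 5-smooth list with this tail bound, reordering, and summing $1/n_j$ yields $\overline{\beta}\leq\overline{\beta_5}<0.819222$. The lower bound $\overline{\beta}>0.815509$ is the content of Theorem \ref{intup}, completing the two-sided estimate.

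The main obstacle is computational: certifying the exclusion tables amounts to solving maximum-independent-set problems in the 3-uniform hypergraph on $S^s_{n_j}$ whose edges are the three-term integer-ratio geometric progressions. The size of this hypergraph, and the branching of the search, grow rapidly with $n_j$, which is exactly why one has to halt the direct search around $n_j=512$ for $s=5$ and rely on the $5^e\cdot I_3^{\mathrm{int}}$ dominance to close the gap; the closeness of the final upper and lower bounds depends on how fast these tail terms $1/n_j$ decay, which is why the method produces a bound within roughly $0.004$ of the construction.
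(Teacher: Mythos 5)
Your proposal matches the paper's own argument in essentially every respect: the lower bound is quoted from Theorem \ref{intup}, and the upper bound comes from the same integer-ratio adaptation of the exclusion-counting framework of Theorems \ref{23bound} and \ref{alphas}, with the computed 3-smooth thresholds ($4,9,18,\ldots,9216$ giving $0.820555$), the computed 5-smooth thresholds up to $512$, and the tail dominated by the sorted multiset of $5$-power multiples of the 3-smooth thresholds, exactly as the paper does for $\overline{\alpha_5}$ and then for $\overline{\beta}$. No genuinely different route and no gap worth flagging.
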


We can also use the set we constructed in the proof of Theorem \ref{23bound}, which had high upper density while avoiding geometric progressions involving the primes 2 and 3, to construct sets free of any rational ratio progression and have a higher upper density than Rankin's set.  

\begin{theorem} There exist geometric-progression-free sets with upper density greater than $0.730027$, so $0.730027 < \overline{\alpha} < 0.772059$. \label{lowerrat}
\end{theorem}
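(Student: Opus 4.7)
The plan is to construct the set by combining two structures: the dense $\{2,3\}$-rational-GP-free set underlying the proof of Theorem \ref{23bound}, which captures the best behavior on the $2$- and $3$-parts of integers, with Rankin's construction applied only to primes at least $5$. The algebraic key is that any rational $r \neq 1$ factors uniquely as $r = r_{23} \cdot r_{\geq 5}$, where $r_{23}$ involves only the primes $2$ and $3$ and $r_{\geq 5}$ involves only primes $\geq 5$. For a putative rational geometric progression $(n, nr, nr^2)$, either $r_{23} \neq 1$, in which case the $3$-smooth parts of the three terms themselves form a nontrivial $3$-smooth GP, or $r_{23} = 1$ and $r_{\geq 5} \neq 1$, in which case some prime $p \geq 5$ has $v_p(r) \neq 0$ and $(v_p(n), v_p(nr), v_p(nr^2))$ is a nontrivial $3$-term AP. Ruling out these two cases separately is enough to avoid all rational GPs.

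Concretely, I would let $T_0$ be the infinite $3$-smooth GP-free set obtained by removing each $n_j$ with $j \in I_3$ from the $3$-smooth numbers (as in Theorem \ref{alphas}), set $U = \{n \in \N : 2^{v_2(n)}3^{v_3(n)} \in T_0\}$, and let $R^{(5)} = \{n \in \N : v_p(n) \in A^*_3 \text{ for all primes } p \geq 5\}$. The target set is $T = U \cap R^{(5)}$. The construction of $U$ as in Theorem \ref{23bound} guarantees that it is free of $\{2,3\}$-rational GPs, and the Rankin-style construction of $R^{(5)}$ guarantees that it is free of GPs whose ratio involves only primes $\geq 5$. The case split above then shows that $T$ is free of all rational GPs, so the GP-freeness is essentially automatic.

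The work lies in the density calculation. Because membership in $U$ depends only on $v_2$ and $v_3$ while membership in $R^{(5)}$ depends only on $v_p$ for $p \geq 5$, the two conditions are multiplicatively independent, so $T$ has natural density $d(U) \cdot d(R^{(5)}) = \overline{\alpha_3} \cdot d^{(5)}$, where
\[d^{(5)} = \prod_{p \geq 5}\left[\left(1 - \frac{1}{p^2}\right)\prod_{i > 0}\left(1 + \frac{1}{p^{3^i}}\right)\right].\]
I would evaluate this by rewriting it as $(9/\pi^2)\prod_{i > 0}\prod_{p \geq 5}(1 + 1/p^{3^i})$ and using $\prod_p(1 + 1/p^s) = \zeta(s)/\zeta(2s)$ to handle each inner factor, obtaining $d^{(5)} > 0.9235$; combined with $\overline{\alpha_3} > 0.790470$ this yields $\bar{d}(T) > 0.730027$. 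The subtlest point, and the step that will take most care, is verifying that $U$ really does have natural density equal to $\overline{\alpha_3}$: this requires that the exclusion series $\sum_{j \in I_3} 1/n_j$ converges to the same limit whose complement is $\overline{\alpha_3}$ in Theorem \ref{alphas}, after which the density product is immediate and the remaining numerics are a routine Euler-product computation.
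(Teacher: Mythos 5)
Your case split for geometric-progression-freeness (ratio with nontrivial $3$-smooth part handled by the $2,3$-structure; otherwise some prime $p\geq 5$ in the ratio handled by the Rankin condition $v_p\in A_3^*$) is exactly the reasoning the paper uses, and your Euler-product numerics for the factor $\prod_{p\geq 5}\bigl(\frac{p-1}{p}\sum_{i\in A_3^*}p^{-i}\bigr)>0.9235$ are correct. The gap is precisely at the point you flag as ``the subtlest point.'' Your set $T_0$, the $3$-smooth numbers with $n_j$ removed for $j\in I_3$, is \emph{not} free of geometric progressions: the exclusions in Table 1 occur at $4,9,16,18,\dots$, so $3$, $6$ and $12$ all survive, and $(3,6,12)$ is a $3$-smooth progression inside $T_0$ (likewise $(6,12,24)$, etc.). The index set $I_3$ records \emph{when} an additional exclusion becomes necessary, not \emph{which} element is excluded; the optimal hitting sets for the truncations $S^3_{n_j}$ change as $j$ grows (for $S^3_9$ one may delete $\{2,9\}$, but by $S^3_{12}$ one must switch to something like $\{3,4\}$) and are not nested, so there is no canonical infinite $3$-smooth GP-free set whose excluded reciprocal sum equals $\sum_{j\in I_3}1/n_j$, and hence no justification that $U$ has natural density $\overline{\alpha_3}$. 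Indeed, if your $T=U\cap R^{(5)}$ existed with \emph{natural} density $\overline{\alpha_3}\cdot d^{(5)}>0.73$, you would have produced a GP-free set with asymptotic density exceeding Rankin's $0.71974$, which is far stronger than the theorem and contrary to the paper's conjecture that Rankin's set is optimal among sets possessing a density.

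The paper avoids this by never building a single infinite set of full density: it fixes $N$, and for each $b\in G_3^*$ coprime to $6$ keeps, within each window of $[1,N]$, the allowed $3$-smooth multiples of $b$ as determined by the finite computation behind Theorem \ref{23bound}; this finite set has size at least $0.790470\,\prod_{p\geq 5}\bigl(\frac{p-1}{p}\sum_{i\in A_3^*}p^{-i}\bigr)N+O(1)$. It then stitches together dilated copies of these finite blocks at rapidly growing scales $N_i$, exactly as in Theorem \ref{intup}, which produces a set that is GP-free and has \emph{upper} density greater than $0.730027$ --- which is all the theorem claims. To repair your argument, replace the infinite set $T_0$ by the finite truncated construction at scale $N$ and add this stitching step; the rest of your outline (the ratio factorization and the density factor over $p\geq 5$) then goes through as in the paper.
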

\begin{proof}Recall that Rankin's construction consisted of integers with exponents on primes contained in the set $A_3^*$, the greedily chosen set free of arithmetic progressions.  To construct a set with greater upper density, we start with the set described above, in Theorem \ref{23bound} free of geometric progressions involving the primes 2 and 3 and with upper density 0.790470, and then remove from it those integers which have a prime greater than 3 with exponent not contained in $A_3^*$.  Essentially, rather than taking all integers, $b$, coprime to 6 in the argument above, we use Rankin's construction to choose integers coprime to 6 which do not themselves contain any geometric progressions.  We then have found a more efficient way (in regard to upper density) of choosing exponents for the primes 2 and 3 than Rankin's method. 

To find the upper density of this set, we recall the Euler product of the density of Rankin's set, 
\[ \frac{1}{\zeta(2)}\prod_{i>0}\frac{\zeta(3^i)}{\zeta(2\cdot 3^i)} = \prod_p\left(\frac{p-1}{p}\sum_{i \in A_3^*}p^{-i}\right).\]
Now, fix an integer $N$, and consider, for example, the interval $[N/6,N/8)$.  Since 4 required one exclusion and 6 did not yet require an additional exclusion, we could take for each $b \in [N/6,N/8)$ with $(b,6)=1$ four of the five integers $b,2b,3b,4b,6b$ without creating a 2,3-rational progression, a total contribution of $\frac{4}{3}(\frac{N}{4}-\frac{N}{6})+O(1)$ integers less than $N$.  We now add the further restriction that our integers $b$ must be in the set $G^*_3$ (In order to avoid progressions involving other primes) as well as being coprime to 6.  Such candidates for $b$ have asymptotic density 
\[\frac{1}{3}\prod_{p\geq 5}\left(\frac{p-1}{p}\sum_{i \in A_3^*}p^{-i}\right)\]
and so the contribution from the range $[N/6,N/8)$ is now 
\[\frac{4}{3}\left(\frac{N}{4}-\frac{N}{6}\right)\prod_{p\geq 5}\left(\frac{p-1}{p}\sum_{i \in A_3^*}p^{-i}\right)+O(1).\]
Doing this for every interval gives us a contribution of 
\[\left(1-\frac{N}{3}\left(\frac{1}{4}{+}\frac{1}{9}{+}\cdots{+}\frac{1}{5832}+\sum_{\substack{n>5832 \\ n \text{ is 3-smooth}}}\frac{1}{3n}\right)\right)\prod_{p\geq 5}\left(\frac{p-1}{p}\sum_{i \in A_3^*}p^{-i}\right)+O(1).\]
As in the proof of Theorem \ref{intup} we can stitch together this construction for increasingly larger values of $N$, yielding a set with upper density greater than
\[0.790470\prod_{p\geq 5}\left(\frac{p-1}{p}\sum_{i \in A_3^*}p^{-i}\right) > 0.730027. \qedhere\]
\end{proof}

The specific case of sets of integers which avoid progressions involving only a single prime (or in fact any single integer) was recently studied by Nathanson and O'Bryant \cite{irrational}, in which they find that the upper density described above converges to an irrational number.  In the case of progressions involving only the prime 2, the series for $\overline{\alpha_2}$ converges to an irrational number approximately $0.846378$ with error less than $0.000001$. 

\section{Computing $\overline{\alpha}$}
The arguments given above show, not only how to compute good approximations for each $\overline{\alpha_s}$ which we can use to bound $\overline{\alpha}$, but also that these values converge to $\overline{\alpha}$.
\begin{theorem}
In the limit, as a larger set of initial primes is taken into account, $\lim_{s \to \infty} \overline{\alpha_s} = \overline{\alpha}$.
\end{theorem}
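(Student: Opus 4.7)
My plan is to establish $\lim_{s \to \infty} \overline{\alpha_s} = \overline{\alpha}$ via two inequalities. The easy direction is $\overline{\alpha} \leq \lim_{s \to \infty}\overline{\alpha_s}$: any set $A \subset \N$ free of all rational geometric progressions is, in particular, free of $s$-smooth progressions, so by Theorem~\ref{alphas}, $\bar{d}(A) \leq \overline{\alpha_s}$ for every $s$, and taking the sup gives $\overline{\alpha} \leq \overline{\alpha_s}$. Since the $\overline{\alpha_s}$ are monotone non-increasing in $s$ (enlarging the prime set adds constraints), the limit exists and satisfies $\overline{\alpha} \leq \lim_s \overline{\alpha_s}$.

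For the reverse inequality, I would generalize the construction of Theorem~\ref{lowerrat}. Fix $\epsilon > 0$. By Theorem~\ref{alphas} take a set $A_s \subset \N$ free of $s$-smooth rational progressions with $\bar{d}(A_s) \geq \overline{\alpha_s} - \epsilon$; the construction from the proof of Theorem~\ref{23bound} writes each integer uniquely as $n = bm$ with $b$ coprime to every prime $\leq s$ and $m$ an $s$-smooth number, and prescribes an admissible set of multipliers $m$ that depends only on the scale of $n$, not on $b$. Define $A'_s \subset A_s$ by additionally requiring $v_p(b) \in A^*_3$ for every prime $p > s$ (so the cofactor lies in Rankin's set restricted to primes exceeding $s$). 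To verify $A'_s$ is free of \emph{all} rational geometric progressions, suppose $(a,ak,ak^2) \subset A'_s$ with $k = u/v$ in lowest terms. If every prime divisor of $uv$ is $\leq s$, the progression is $s$-smooth, contradicting $A_s$. Otherwise some prime $p > s$ divides $uv$, and then $v_p(a), v_p(ak), v_p(ak^2)$ is a nontrivial three-term arithmetic progression in the nonnegative integers (common difference $v_p(k) \neq 0$) whose entries are all forced into $A^*_3$ by the filter---contradicting the construction of $A^*_3$ as an arithmetic-progression-free set.

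Because the $s$-smooth multiplier structure underlying $A_s$ does not depend on the cofactor $b$, the Rankin filter multiplies the upper density by
\[c_s \;=\; \prod_{p > s}\left(\frac{p-1}{p}\sum_{i \in A^*_3} p^{-i}\right),\]
which converges to $1$ as $s \to \infty$ since the full product over all primes equals $d(G^*_3) > 0$ (Theorem~\ref{rankinconstruction}). Choosing $s$ large enough that $c_s \geq 1 - \epsilon$ yields $\bar{d}(A'_s) \geq (\overline{\alpha_s} - \epsilon)(1 - \epsilon) \geq \overline{\alpha_s} - 2\epsilon$, hence $\overline{\alpha} \geq \overline{\alpha_s} - 2\epsilon \geq \lim_t \overline{\alpha_t} - 2\epsilon$, and sending $\epsilon \to 0$ concludes. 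The main technical point requiring care is justifying that the upper density of $A'_s$ genuinely factors as $\bar{d}(A_s) \cdot c_s$ rather than something strictly smaller; this reduces to the observation from Theorem~\ref{23bound} that the exclusion pattern defining $A_s$ is prescribed uniformly on scale ranges of the cofactor $b$, so restricting $b$ to a subset of relative density $c_s$ among cofactors coprime to primes $\leq s$ rescales the density of $A_s$ by precisely $c_s$ in the limit.
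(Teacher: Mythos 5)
Your proposal is correct and follows essentially the same route as the paper: both directions come from the sandwich $\overline{\alpha_s}\prod_{p>s}\bigl(\frac{p-1}{p}\sum_{i \in A^*_3}p^{-i}\bigr) \leq \overline{\alpha} \leq \overline{\alpha_s}$, with the lower bound obtained by generalizing the Rankin-filter construction of Theorem \ref{lowerrat} and the conclusion following from convergence of the Euler product. You simply spell out in more detail the verification and the density-factoring step that the paper leaves implicit.
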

\begin{proof}
The arguments of Theorems \ref{lowerrat} and \ref{alphas} show that \[\overline{\alpha_s}\prod_{p>s}\left(\frac{p-1}{p}\sum_{i \in A_3^*}p^{-i}\right)\leq \overline{\alpha}\leq \overline{\alpha_s}\]
and since this Euler product converges, we know that \[
\lim_{s \to \infty} \prod_{p>s}\left(\frac{p-1}{p}\sum_{i \in A_3^*}p^{-i}\right) = 1.\]
The conclusion follows.
\end{proof}

Thus, since we have a method to compute each $\overline{\alpha_s}$ to any desired precision, we can also do so for the constant $\overline{\alpha}$ by extending the methods described above.  We look here at the complexity of computing $\overline{\alpha}$.
\begin{theorem}
For each number $\epsilon$ with $0<\epsilon<1$, the constant $\overline{\alpha}$ can be computed to within $\epsilon$ in time $O\left(1.6538^{(-2\log_2 \epsilon)^{\frac{1}{\epsilon}}}\right)$.
\end{theorem}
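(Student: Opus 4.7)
The plan is to combine three error bounds: (i) the approximation error $\overline{\alpha_s}-\overline{\alpha}$, which determines how large $s$ must be chosen; (ii) the truncation error when cutting off the infinite series defining $\overline{\alpha_s}$; and (iii) the computational cost of determining each $m_j$.

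\medskip
\textbf{Step 1 (choose $s$).} By the previous theorem together with Theorem \ref{alphas},
\[
0\le \overline{\alpha_s}-\overline{\alpha}\;\le\; 1-\prod_{p>s}\left(\frac{p-1}{p}\sum_{i\in A_3^*}p^{-i}\right).
\]
Since $\frac{p-1}{p}\sum_{i\in A_3^*}p^{-i}=1-O(p^{-2})$, this product tail is $O(\sum_{p>s}p^{-2})=O(1/s)$, so it suffices to take $s$ of order $1/\epsilon$ to make the error at most $\epsilon/2$.

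\medskip
\textbf{Step 2 (truncate the series).} By Theorem \ref{alphas},
\[
\overline{\alpha_s} \;=\; 1-\prod_{p\le s}\frac{p-1}{p}\,\sum_{j\in I_s}\frac{1}{n_j},
\]
so truncating the $j$-sum at $j\le K$ introduces an error bounded by the tail $\sum_{n>n_K,\;s\text{-smooth}}1/n$. A Rankin-style estimate,
\[
\sum_{\substack{n>N\\ n\ \text{is }s\text{-smooth}}}\frac{1}{n}\;\le\; N^{-\sigma}\prod_{p\le s}(1-p^{-1-\sigma})^{-1},
\]
with $\sigma$ chosen in terms of $s$, shows that taking $n_K$ of size roughly $1/\epsilon^{2}$ makes this tail smaller than $\epsilon/2$. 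The number of $s$-smooth integers up to $1/\epsilon^{2}$ is then bounded by
\[
\Psi\!\left(1/\epsilon^{2},s\right)\;\asymp\;\frac{(\log_2(1/\epsilon^{2}))^{\pi(s)}}{\pi(s)!\prod_{p\le s}\log p}\;\le\;(-2\log_2\epsilon)^{1/\epsilon},
\]
using $\pi(s)\le s$ with $s\asymp 1/\epsilon$. So we may take $K\le (-2\log_2\epsilon)^{1/\epsilon}$.

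\medskip
\textbf{Step 3 (compute each $m_j$).} For each $j\le K$, computing $m_j$ is equivalent to finding a maximum independent set in the $3$-uniform hypergraph whose hyperedges are the geometric progressions inside $\{n_1,\dots,n_j\}$. A branch-and-bound search on this hypergraph, where at each recursive step one picks a vertex belonging to some uncovered triple and branches on whether to include or exclude it (and when it is included, on which of the remaining vertex of a containing triple is removed), runs in time $O(c^K)$ for a base $c$ that can be kept at most $1.6538$. Since $j\le K$ throughout and only $K$ values need to be tabulated, the entire table of $m_j$'s is produced in time $O(K\cdot 1.6538^{K})=O(1.6538^{K})$, which dominates the $\mathrm{poly}(K)$ cost of enumerating the $s$-smooth numbers and summing the reciprocals.

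\medskip
\textbf{Main obstacle.} The delicate step is Step 2: producing a tail estimate for the $s$-smooth reciprocals that is sharp enough to yield precisely the exponent $(-2\log_2\epsilon)^{1/\epsilon}$. This requires calibrating $s=\Theta(1/\epsilon)$ against the Rankin parameter $\sigma$, and verifying that the $\pi(s)!$ denominator and the $\prod_{p\le s}\log p$ factor together absorb into the stated exponent. Step 3 is standard once we recognize $m_j$ as a hypergraph independence number; Step 1 is already implicit in the convergence proof above.
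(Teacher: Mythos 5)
The fatal gap is in your Step 2. With $s\asymp 1/\epsilon$, truncating the $s$-smooth numbers at $n_K\approx 1/\epsilon^{2}\approx s^{2}$ does \emph{not} make the truncation error at most $\epsilon/2$. The quantity that must be small is $\bigl(\prod_{p\le s}\frac{p-1}{p}\bigr)\sum_{n>X,\ n\ s\text{-smooth}}\frac1n$, and for $X=s^{u}$ with $u$ bounded this is bounded below by a positive absolute constant: by the standard asymptotics $\sum_{n\le y^{u},\,P^+(n)\le y}1/n\sim\log y\int_0^{u}\rho(v)\,dv$ and $\prod_{p\le y}(1-1/p)^{-1}\sim e^{\gamma}\log y$, the relative tail at $u=2$ is about $1-e^{-\gamma}\int_0^2\rho(v)\,dv\approx 0.09$, independent of $\epsilon$. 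Your Rankin-style bound cannot rescue this: as written it is mis-stated (the correct pairing is $X^{-\sigma}\prod_{p\le s}(1-p^{\sigma-1})^{-1}$), and for any admissible $\sigma$ the product factor is at least $\prod_{p\le s}(1-1/p)^{-1}\gg\log s$, which $X^{-\sigma}$ with $X=1/\epsilon^{2}$ cannot offset; indeed no cutoff polynomial in $1/\epsilon$ works, since the smooth tail only drops below $\epsilon$ once $u\gtrsim\log(1/\epsilon)/\log\log(1/\epsilon)$. So the algorithm you describe would be fast but would return $\overline{\alpha_s}$ (hence $\overline{\alpha}$) only to within a constant error near $0.1$, not within $\epsilon$. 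The paper truncates differently, and this is the idea you are missing: it keeps the $s$-smooth numbers all of whose prime exponents are at most $N=\lceil-\log_2(\epsilon/(2\pi(s)))\rceil$ (a box $B^s_N$), so the tail can be bounded prime by prime,
\[
\Bigl(\prod_{p\le s}\tfrac{p-1}{p}\Bigr)\sum_{\substack{n\ s\text{-smooth}\\ n\notin B^s_N}}\frac1n\;\le\;\sum_{p\le s}\frac{p-1}{p}\sum_{i>N}p^{-i}\;\le\;\frac{\epsilon}{2},
\]
and then replaces $B^s_N$ by the $(N+1)^{\pi(s)}$ smallest $s$-smooth numbers, which only decreases the error; the count $K=(N+1)^{\pi(s)}\le(-2\log_2\epsilon)^{1/\epsilon}$ is exactly what feeds the exponent in the stated running time.

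A secondary issue is Step 3: asserting that a home-made branch-and-bound for the minimum 3-hitting set problem ``can be kept at most $1.6538$'' is not a proof; that constant is the running time of Wahlstr\"{o}m's algorithm, which the paper cites and invokes, and a generic include/exclude branching yields a worse base, so the time bound in the statement would not follow. Your Step 1 is fine and matches the paper's choice $s\asymp 1/\epsilon$ via $\prod_{p>s}(1-p^{-2})\ge s/(s+1)$.
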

\begin{proof}
We need, first, to consider a sufficient number of primes so that \[\prod_{p>s}\left(\frac{p-1}{p}\sum_{i \in A_3^*}p^{-i}\right) > 1- \epsilon/2.\]  
Where $A^*_3$ is greedily chosen set of integers free of arithmetic progressions used in Rankin's construction.  Using the inequality
\begin{align*}
\prod_{p>s}\left(\frac{p-1}{p}\sum_{i \in A_3^*}p^{-i}\right) &> \prod_{p>s}\frac{p-1}{p}\left(1+\frac{1}{p}\right) =\prod_{p>s}\left(1-\frac{1}{p^2}\right)\\
&>\prod_{n>s}\left(1-\frac{1}{n^2}\right) = \prod_{n>s}\frac{(n-1)(n+1)}{n^2} = \frac{s}{s+1},
\end{align*}
we see that taking $s > \frac{1-\epsilon/2}{\epsilon/2}=\frac{2-\epsilon}{\epsilon}$ suffices.  

Second, we need to compute $\overline{\alpha_s}$ to sufficient accuracy so that the trailing terms in our series of reciprocals of $s$-smooth numbers is less than $\epsilon/2$.  Let $N = \left\lceil-\log_2\left(\frac{\epsilon}{2\pi(s)}\right)\right\rceil$.  
Then \[\sum_{i>N} \frac{1}{p^i}\leq \sum_{i>N} \frac{1}{2^i} = \frac{1}{2^N} \leq \frac{\epsilon}{2\pi(s)}.\]  Then the error in approximating $\overline{\alpha_s}$ by using the $(N+1)^{\pi(s)}$ $s$-smooth integers $B^s_N = \{ 2^{i_1}3^{i_2}\cdots p_{\pi(s)}^{i_{\pi(s)}}:0\leq i_k \leq N \}$ is less than 
\begin{align*}
\left(\prod_{p\leq s}\frac{p-1}{p}\right)\sum_{\substack{n\text{ is $s$-smooth}\\ n \not \in B^s_N}} \frac{1}{n} &< \left(\prod_{p\leq s}\frac{p-1}{p}\right)\sum_{p\leq s}\sum_{\substack{n\text{ is $s$-smooth}\\ v_p(n)>N}} \frac{1}{n} \\
& = \left(\prod_{p\leq s}\frac{p-1}{p}\right)\sum_{p\leq s}\left(\left(\sum_{i>N}\frac{1}{p^i} \right)\prod_{\substack{q \neq p\\ q\leq s}}\sum_{i\geq 0}\frac{1}{q}\right)\\
&= \sum_{p\leq s}\left(\frac{p-1}{p}\left(\sum_{i>N}\frac{1}{p^i} \right)\right)< \sum_{p\leq s} \frac{\epsilon}{2\pi(s)} = \frac{\epsilon}{2}.
\end{align*}
Now, using the $(N+1)^{\pi(s)}$ smallest $s$-smooth integers, rather than those in $B^s_N$ will only make the error smaller.  So it will suffice for our computation to work with the exclusions required among the first $(N+1)^{\pi(s)}$ $s$-smooth integers.  In particular, we need to calculate for each $j \leq K$ the minimal number of exclusions required from the set of integers $S^s_j$.  

We need to exclude at least one member of each 3-term geometric progression contained in this set, an example of a 3-hitting set problem, which is a problem known to be NP-complete.  In \cite{wahlstrom} Wahlstr\"{o}m gives an algorithm for computing a 3-hitting set for a set of size $n$ (and any collection of 3-element subsets of it) in time $O\left(1.6538^n\right)$.  (He also gives an algorithm that requires exponential space but runs in time $O\left(1.6318^n\right)$.)

Using this algorithm for each $j \leq K$ to compute the minimal number of exclusions will require  time $O\left(\sum_{j\leq K} 1.6538^j\right) = O\left(1.6538^K\right)$ in total.  Substituting in the definitions of $K$ and $N$, we see this takes time 
\[O\left(1.6538^{\left(\left\lceil-\log_2\left(\frac{\epsilon}{2\pi\left(\frac{2-\epsilon}{\epsilon}\right)}\right)\right\rceil + 1\right)^{\pi\left(\frac{2-\epsilon}{\epsilon}\right)}}\right) = O\left(1.6538^{\log_2\left(\frac{8\pi\left(\frac{2-\epsilon}{\epsilon}\right)}{\epsilon}\right)^{\pi\left(\frac{2-\epsilon}{\epsilon}\right)}}\right)\]
or, using the crude inequality  $\pi\left(\frac{2-\epsilon}{\epsilon}\right) < 8\pi\left(\frac{2-\epsilon}{\epsilon}\right) < \frac{1}{\epsilon}$ for sufficiently small $\epsilon$, \[O\left(1.6538^{(-2\log_2 \epsilon)^{\frac{1}{\epsilon}}}\right).\]
Having done so, and letting $M$ be the calculated partial sum of reciprocals that require an additional exclusion times $\prod_{p\leq s}\frac{p-1}{p}$ we see that 
\begin{align*}
1-M>\overline{\alpha} &> \left(1-M -\frac{\epsilon}{2}\right)\prod_{p>s}\left(\frac{p-1}{p}\sum_{i \in A_3^*}p^{-i}\right)\\
 &> \left(1-M -\frac{\epsilon}{2}\right)\left(1-\frac{\epsilon}{2}\right)>1-M-\epsilon
\end{align*}
and thus we have achieved the required precision in our estimate of $\overline{\alpha}$.

\end{proof}
The same arguments apply for the constant $\overline{\beta}$ as well.  While it appears from our computations so far that the argument of Theorem \ref{intup} is far more efficient at computing lower bounds for $\overline{\beta}$ than the analogous argument to that described here for integer-ratios, we cannot prove that the construction described there converges to $\overline{\beta}$.

\section{Sets with Asymptotic Density} \label{density}
All of the upper bounds given here and elsewhere in the literature are for the upper density of geometric-progression-free sets, while the set Rankin constructed to produce a lower bound has an asymptotic density.  It is quite possible that the restricted collection of such sets which possess an asymptotic density will have smaller densities.  

We can prove that this is the case when avoiding progressions involving one prime if we slightly strengthen the hypothesis that our sets have an asymptotic density.  We say that a set $S$ has a 2-graded density if every subset $S_i = \{s \in S:v_2(s) = i\}$ has an asymptotic density.  Any set with a 2-graded density also has an asymptotic density. We show here that if $S$ is a set free of geometric progressions involving powers of 2 and has a 2-graded density then $S$ will have asymptotic density strictly smaller than $\overline{\alpha_2} \approx 0.846378$, the supremum of the upper densities of all 2-smooth geometric-progression-free sets mentioned at the end of Section \ref{bounds}.

\begin{theorem}The set $T = \{n \in \mathbb{N}:v_2(n) \in A^*_3\}$ (where $A^*_3$ is the set free of arithmetic progressions obtained by the greedy algorithm) has the largest asymptotic density among all sets $S$ which are free of $2$-geometric progressions (of length $3$) and have a $2$-graded density.  This set $T$ has asymptotic density $d(T) < 0.845398 < \overline{\alpha_2}$.  
\end{theorem}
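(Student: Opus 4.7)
My approach reduces the theorem to a weighted extremal problem on AP-free subsets of $\Z_{\geq 0}$. For any $S \subseteq \N$ and each odd $m \geq 1$, associate the exponent set $A_m = \{i \geq 0 : 2^i m \in S\}$. A $3$-term $2$-geometric progression $(2^{i_1}m, 2^{i_2}m, 2^{i_3}m)$ in $S$ corresponds to a $3$-AP $(i_1, i_2, i_3)$ in some $A_m$, so $S$ is free of $2$-geometric progressions if and only if every $A_m$ is $3$-AP-free. This is the essential idea already used in the proof of Theorem \ref{rankinconstruction} (taking $v_p$ turns geometric progressions into arithmetic ones), but we apply it one prime at a time rather than all at once.

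Writing $p_i$ for the density, inside the odd integers, of the set $\{m\text{ odd}: i \in A_m\}$, the hypothesis that $S$ has $2$-graded density is precisely the existence of every $p_i$, and the level set $S_i = \{n \in S : v_2(n) = i\}$ has density $p_i / 2^{i+1}$, giving $d(S) = \sum_{i \geq 0} p_i/2^{i+1}$. I would then bound this by averaging $F(A_m) := \sum_{i \in A_m} 2^{-(i+1)}$ over odd $m \leq M$ and exchanging the order of summation. Because $\sum_{i > I} 2^{-(i+1)} \to 0$ uniformly in $m$, the exchange is routine and yields $d(S) \leq M^* := \sup\{F(A) : A \subseteq \Z_{\geq 0} \text{ is } 3\text{-AP-free}\}$.

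The heart of the proof is the claim $M^* = F(A^*_3)$. Let $A$ be any AP-free subset of $\Z_{\geq 0}$ and let $i_0$ be the smallest index where $A$ and $A^*_3$ differ. Since $A^*_3$ is the greedy set and $A \cap [0, i_0-1] = A^*_3 \cap [0, i_0-1]$, the case $i_0 \in A \setminus A^*_3$ is impossible: the greedy rejected $i_0$ only because there exist $a < b < i_0$ in $A^*_3 \cap [0, i_0-1]$ forming a $3$-AP with $i_0$, and these same $a, b$ then lie in $A$, contradicting AP-freeness. So $i_0 \in A^*_3 \setminus A$, and therefore $\sum_{i \in A, i \geq i_0} 2^{-(i+1)} \leq \sum_{j \geq i_0+1} 2^{-(j+1)} = 2^{-(i_0+1)}$, already dominated by the single term $A^*_3$ gains from including $i_0$. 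Hence $F(A) \leq F(A^*_3)$. Since every exponent set of $T$ equals $A^*_3$, $T$ has $2$-graded density and attains $d(T) = M^*$.

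For the numerical bound, $A^*_3$ is the set of non-negative integers whose base-$3$ expansion uses only $0$s and $1$s, so
\[d(T) = \frac{1}{2}\prod_{j=0}^{\infty}\bigl(1+2^{-3^j}\bigr).\]
The first four factors give $\tfrac{1}{2}\cdot\tfrac{3}{2}\cdot\tfrac{9}{8}\cdot\tfrac{513}{512} = \tfrac{13851}{16384} = 0.84539794\ldots$, while the tail $\prod_{j \geq 3}(1+2^{-3^j})$ contributes a factor less than $1 + 2^{-25}$, so $d(T) < 0.845398 < \overline{\alpha_2}$. The main subtlety I expect is in the extremality step: the weight $2^{-(i+1)}$ exactly equals $\sum_{j > i} 2^{-(j+1)}$, so the comparison argument is as tight as it can be, and one really needs the greedy structure of $A^*_3$ to exclude the case $i_0 \in A \setminus A^*_3$.
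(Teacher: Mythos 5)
Your proof is correct, but it takes a different formal route from the paper. The paper argues by contradiction with an exchange argument carried out directly on a hypothetical better $2$-graded set $T'$: it locates a level $j \in A^*_3$ where $T'$ is deficient (via a pigeonhole argument on the graded densities), forms the sets $U$ and $U' = \{2^k u : u \in U, k \geq 1\}$, and shows that swapping $U$ in and $U'$ out strictly increases the density, so no level $i \notin A^*_3$ can carry positive density. You instead fiber over odd parts: the exponent sets $A_m = \{i : 2^i m \in S\}$ must each be $3$-AP-free, an averaging over odd $m \leq M$ (with the uniform tail bound on $\sum_{i>I}2^{-(i+1)}$ justifying the limit exchange) gives $d(S) \leq \sup\{\sum_{i \in A}2^{-(i+1)} : A \text{ AP-free}\}$, and the weighted extremal problem is solved by a first-disagreement comparison with the greedy set, using that greedy rejection of $i_0$ is always witnessed by a $3$-AP with top term $i_0$ and that the weight $2^{-(i_0+1)}$ equals the total weight of all higher indices. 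Both proofs rest on the same two facts (the greedy structure of $A^*_3$ and that identity of weights), but your reduction to a single deterministic extremal lemma over AP-free subsets of $\Z_{\geq 0}$ is cleaner and more self-contained than the paper's density-level swap, which needs the pigeonhole step and an iteration over all $j \notin A^*_3$; what the paper's version buys is a more directly constructive picture of how a purported better set can be improved. Your closed form $d(T) = \frac{1}{2}\prod_{j\geq 0}(1+2^{-3^j})$ and the resulting bound $d(T) < 0.845398$ agree with the paper's $\frac{1}{2}\sum_{i \in A^*_3}2^{-i}$, and the comparison with $\overline{\alpha_2}$ uses, as in the paper, the cited numerical value $\overline{\alpha_2} \approx 0.846378$.
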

\begin{proof}  The set $T$ is free of progressions involving powers of 2 and for each $i$, 
\[d(T_i)= \left\{ \begin{array}{ll}
            2^{-i-1} & \quad i \in A^*_3 \\
            0 & \quad i \not \in A^*_3
        \end{array}.
    \right. \]
Furthermore, $T$ is the set obtained by the greedy algorithm, each integer $t$ is included in $T$ if doing so does not create a 2-smooth geometric progression with smaller integers already in $T$.  (Note that $d(T) = \frac{1}{2}\sum_{i \in A^*_3}2^{-i} <  0.845398$.)

Suppose that $T$ were not the optimal set with these properties.  Namely, let $T'$ be a 2-graded set free of 2-smooth geometric progressions, with a greater density than $T$.  Because $T'$ is 2-graded we have $d(T')=\sum_{i} d(T'_i)$, and since $d(T')>d(T)$  there must exist some $i \not \in A^*_3$ such that $d(T'_i) >0$. In other words, numbers containing one of the powers of 2 forbidden from appearing in the greedy set $T$ must make a positive contribution to the density of $T'$.  Now, because each integer $t \in T'_i$ is not in the set $T$ which was obtained by the greedy algorithm, it must be precluded by the existence of at least  one smaller integer $u \in T$, $u \not \in T'$ and where $t = 2^ku$ for some integer $k$. 

As there are infinitely many such integers $t \in T'_i$, and each corresponds to at least one distinct $u < t$ with the property above, we have by the pigeon hole principle that there must be at least one $j \in A^*_3$, $j<i$ such that $d(T'_j)<2^{-j-1}= d(T_j)$.  For such $j$ the density contribution of the set $T'_i$ is strictly less than the the corresponding contribution of $T_j$ to $T$.  Let $j$ be the least such integer with this property, and let $U = \{u : v_2(u) = j, u \not \in T'_j\}$. Then $d(U) = 2^{-j-1}-d(T'_j) = \delta \geq 0$.  Essentially, the elements of $U$ have been excluded from $T'$ in order to include certain integers later which have greater powers of 2.  We will see, however, that this trade is not optimal.

Let $U' = \{2^k u:u \in U, k \geq 1\}$ be all of the multiples of elements of $u$ times a power of 2,  which we can consider to be all of the integers whose containment in $T'$ could possibly be affected by including instead the elements of $U$.  Then $d(U') = \sum_{i\geq 1} \delta/2^{i} = \delta$.  The intersection of $U'$ with $T'$ must be strictly smaller however, as the set $U'$ contains many 2-smooth geometric progressions. So $d(U \cap T') <\delta$, all of the multiples of elements of $U$ by powers of 2 which could possibly be included in a 2-smooth geometric-progression-free set have a smaller density than the elements of $U$ itself.  Thus we can construct a larger 2-smooth geometric-progression-free set by including the elements of $U$ in our set rather than any of the elements of $U'$.  The set $T''=(T'\setminus U') \cup U$ will also be free of 2 geometric progressions, will satisfy the density requirements and $d(T'') = d(T' \setminus U')+d(U) > d(T')-\delta+\delta = d(T')$.  $T''$ contains the elements of $U$ rather than (some of) those from $U'$ and has a strictly larger density.  We can now repeat this process for each $j \not \in A^*_3$, and argue that it is never optimal to have $d(T'_j)>0$.  Therefore, the largest density we can obtain is by taking the maximum possible contribution from each set $T_j$, $j \in A^*_3$, namely the set $T$ itself.
\end{proof}

\section{Open Questions}
This paper answers one of the questions in \cite{NO} by demonstrating a method of effectively computing the value of $\overline{\alpha}$.  The question posed in that paper asks for the maximal upper density of sets avoiding progressions of arbitrary length, $k$.  Our constant, $\overline{\alpha}$, is defined in regards to progressions of length 3, but the methods here easily generalize to progressions of any length, $k$. Their second question, however, regarding the precise value of $\alpha$ remains open.  We do not even know the answer to the following question.

\begin{question} Is $\alpha$ strictly smaller than $\overline{\alpha}$?
\end{question}
This seems almost certain to be true, especially given the result of Section \ref{density}.  In light of this result, and the computations required for Theorem \ref{betalow} we make the stronger conjecture.

\begin{conjecture} Rankin's set, $G^*_3$ has the largest possible density among geometric-progression-free sets which have a density, so $\alpha = d(G^*_3)$.
\end{conjecture}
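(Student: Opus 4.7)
The plan is to generalize the single-prime exchange argument of Section \ref{density} to all primes simultaneously. First, I would introduce a notion of \emph{totally graded} density: a set $S \subset \N$ is totally graded if, for every finite set of primes $P$ and every tuple of valuations $(a_p)_{p \in P}$, the fibre $\{n \in S : v_p(n) = a_p \text{ for all } p \in P\}$ has an asymptotic density. Rankin's set $G^*_3$ is totally graded, and it places the maximum possible mass $(p-1)/p^{i+1}$ in the fibre $\{n : v_p(n) = i\}$ for every $i \in A^*_3$ and every prime $p$.

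Next I would carry out an exchange argument. Suppose $S$ is a totally graded, geometric-progression-free set with $d(S) > d(G^*_3)$. Then there must exist some prime $p$ and some index $i \notin A^*_3$ for which the fibre $\{n \in S : v_p(n) = i\}$ has positive density, and by the same pigeonhole step as in the proof of the Section \ref{density} theorem there is some $j \in A^*_3$ with $j < i$ whose fibre has a strictly deficient density contribution. The aim is then to redistribute mass from valuation $i$ to valuation $j$, strictly increasing the density while keeping the set progression-free. Iterating this across all primes would force every $p$-fibre of the optimum to lie on $A^*_3$, yielding $S = G^*_3$ up to a set of density zero.

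Two genuine obstacles separate this plan from a proof. The first is the reduction from an arbitrary density set to a totally graded one. The assumption that $d(S)$ exists does not obviously imply that the $p$-adic fibres have densities, and the natural fix---passing to subsequential limits, or Ces\`aro-averaging over dilations by small primes---must be carried out so as to preserve both $d(S)$ and progression-freeness. Any averaging or limiting procedure is delicate because dilations can introduce new ratios between surviving elements, and one needs to argue that a totally graded $S^\star$ with $d(S^\star) \geq d(S)$ can always be extracted from $S$.

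The second and main obstacle is that the exchange step does not decouple across primes: a rational progression such as $(4,6,9)$ involves the primes $2$ and $3$ simultaneously, so moving mass from valuation $i$ to valuation $j$ at a single prime $p$ may create new progressions with elements already present at other coordinates. One must therefore coordinate the exchange across all primes at once, probably by viewing the density of $S$ as a measure on the lattice $\bigoplus_p \Z_{\geq 0}$ of valuation vectors, identifying progressions with the affine relations $\mathbf{v}(a) + \mathbf{v}(c) = 2\mathbf{v}(b)$, and arguing that any coordinate deviating from $A^*_3$ can be pushed back to $A^*_3$ at no net cost. Making this coupled multi-prime exchange rigorous, in the presence of all the cross-prime rational ratios, is where I expect the proof to really bite, and it seems to require ideas genuinely beyond those used in the single-prime case of Section \ref{density}.
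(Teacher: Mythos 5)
You should first be aware that the statement you are addressing is not a theorem of the paper: it appears in the ``Open Questions'' section as an explicit conjecture, and the paper offers no proof of it (indeed it records that even the weaker question of whether $\alpha < \overline{\alpha}$ is open). Your submission is likewise not a proof but a program, and you candidly identify the two places where it is incomplete. Both of those gaps are genuine and, as things stand, fatal. The first is the reduction from ``$d(S)$ exists'' to a (totally) graded set: the theorem in Section \ref{density} needs the $2$-graded hypothesis precisely because the existence of $d(S)$ gives no control over the densities of the valuation fibres $\{n \in S : v_2(n)=i\}$, and no averaging, dilation, or subsequential-limit device is exhibited (here or in the paper) that produces a graded $S^\star$ with $d(S^\star)\geq d(S)$ while preserving freedom from \emph{rational} geometric progressions; dilating by a prime power changes which ratios occur, so this is not a routine compactness step.

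The second gap is the essential one, and it is worth seeing why it cannot be patched by simply ``iterating over primes.'' The single-prime exchange in Section \ref{density} works because a $2$-smooth progression constrains only the coordinate $v_2$, so trading mass between the fibres $v_2=i$ and $v_2=j$ interacts with nothing else. For rational ratios the constraint $\mathbf{v}(a)+\mathbf{v}(c)=2\mathbf{v}(b)$ couples all coordinates at once (the progression $(4,6,9)$ being the standard example), so an exchange at one prime can create progressions through elements whose other coordinates were previously harmless, and one cannot certify ``no net cost'' fibre by fibre. The paper itself shows this is not a technicality: Theorem \ref{betalow} constructs a set free of \emph{integer}-ratio progressions whose density exceeds $d(G^*_3)$ exactly by performing such a multi-prime exponent trade (excluding $2\cdot3\cdot5k$ to admit $2^2\cdot3\cdot5k$, $2\cdot3^2\cdot5k$, etc.). So any correct proof of the conjecture must exploit the cross-prime rational-ratio constraints in an essential, quantitative way --- which is precisely the step your outline leaves open. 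As written, the proposal is a reasonable research plan but does not establish the conjecture, and it does not correspond to any argument in the paper, since the paper gives none.
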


One can ask the same question about $\beta$.
\begin{question} Is $\beta < \overline{\beta}$?
\end{question}
\begin{question} While we know from Theorem \ref{betalow} that $\beta > d(G^*_3)$, and from Section \ref{bounds} that $\overline{\beta} > \overline{\alpha}$, do we have $\beta > \alpha$?
\end{question}
\begin{question} Do the densities of the sets constructed by extending the method of Theorem \ref{betalow} converge to $\overline{\beta}$?  
\end{question}
\noindent If so, this would likely result in a far more efficient method of computing $\overline{\beta}$. 

In \cite{BBHS}, Beiglb\"{o}ck, Bergelson, Hindman, and Strauss take a more Ramsey-theoretic view of the problem.  One of their questions can be answered with the methods of their paper.  They ask if there is a set $A$ of positive integers free of 3-term rational ratio geometric progressions, such that $A$ has positive upper density and $A$ contains arbitrarily long intervals.  Such a set can be constructed by alternating between long runs from Rankin's set of positive density, long gaps with no integers, and consecutive integers in an interval of the shape $[x,x+\sqrt{x}-1]$. Their Lemma 3.3 implies such an interval of consecutive integers has no 3-term rational-ratio geometric progressions.  Modifying this slightly, the set can even be taken to have positive lower density.  Thanks are due to Carl Pomerance for these observations.  In addition, he observes that it is fairly trivial to obtain a van der Waerden type theorem: For any $k$-coloring of the natural numbers, there are arbitrarily long monochromatic integer ratio geometric progressions.  To see this, consider a second $k$-coloring on the integers, where the color of $j$ is determined by the original coloring of $2^j$, and then apply the original van der Waerden theorem to this second coloring.  Here is a nice problem from [1] that remains unsolved.

\begin{question}  Must every infinite set of natural numbers with bounded
gaps between consecutive terms contain arbitrarily long geometric progressions?
\end{question}

\section*{Acknowledgments}
I would like to thank Kevin Ford for suggesting the problem of geometric progressions in $\Z/n\Z$ and my advisor, Carl Pomerance, for his support and invaluable guidance during the development of this paper.

\bibliographystyle{amsplain}
\bibliography{geometricseq}														

\providecommand{\bysame}{\leavevmode\hbox to3em{\hrulefill}\thinspace}
\providecommand{\MR}{\relax\ifhmode\unskip\space\fi MR }
\providecommand{\MRhref}[2]{%
  \href{http://www.ams.org/mathscinet-getitem?mr=#1}{#2}
}
\providecommand{\href}[2]{#2}
\begin{thebibliography}{10}

\bibitem{BBHS}
M.~Beiglb{\"o}ck, V.~Bergelson, N.~Hindman, and D.~Strauss,
  \emph{Multiplicative structures in additively large sets}, J. Combin. Theory
  Ser. A \textbf{113} (2006), no.~7, 1219--1242. \MR{2259058 (2007f:05174)}

\bibitem{Bourgain}
J.~Bourgain, \emph{On triples in arithmetic progression}, Geom. Funct. Anal.
  \textbf{9} (1999), no.~5, 968--984. \MR{1726234 (2001h:11132)}

\bibitem{BG}
B.~E. Brown and D.~M. Gordon, \emph{On sequences without geometric
  progressions}, Math. Comp. \textbf{65} (1996), no.~216, 1749--1754.
  \MR{1361804 (97a:11024)}

\bibitem{PomLambda}
P.~Erd{\H{o}}s, C.~Pomerance, and E.~Schmutz, \emph{Carmichael's lambda
  function}, Acta Arith. \textbf{58} (1991), no.~4, 363--385. \MR{1121092
  (92g:11093)}

\bibitem{Pom123}
J.~Friedlander, C.~Pomerance, and I.~Shparlinski, \emph{Period of the power
  generator and small values of {C}armichael's function}, Math. Comp.
  \textbf{70} (2001), no.~236, 1591--1605, Corrigendum in \textbf{71} (2002),
  1803--1806.

\bibitem{HW}
G.~H. Hardy and E.~M. Wright, \emph{An introduction to the theory of numbers},
  Oxford University Press, 1979.

\bibitem{Heath}
D.~R. Heath-Brown, \emph{Integer sets containing no arithmetic progressions},
  J. London Math. Soc. (2) \textbf{35} (1987), no.~3, 385--394. \MR{889362
  (88g:11005)}

\bibitem{Lev}
V.~F. Lev, \emph{Progression-free sets in finite abelian groups}, J. Number
  Theory \textbf{104} (2004), no.~1, 162--169. \MR{2021632 (2004k:11023)}

\bibitem{Meshulam}
R.~Meshulam, \emph{On subsets of finite abelian groups with no {$3$}-term
  arithmetic progressions}, J. Combin. Theory Ser. A \textbf{71} (1995), no.~1,
  168--172. \MR{1335785 (96g:20033)}

\bibitem{irrational}
M.~B. Nathanson and K.~O'Bryant, \emph{Irrational numbers associated to
  sequences without geometric progressions}, arXiv preprint arXiv:1307.8135
  (2013).

\bibitem{NO}
\bysame, \emph{On sequences without geometric progressions}, arXiv preprint
  arXiv:1306.0280 (2013).

\bibitem{Rankin}
R.~A. Rankin, \emph{Sets of integers containing not more than a given number of
  terms in arithmetical progression}, Proc. Roy. Soc. Edinburgh Sect. A
  \textbf{65} (1960/1961), 332--344 (1960/61). \MR{0142526 (26 \#95)}

\bibitem{Riddell}
J.~Riddell, \emph{Sets of integers containing no {$n$} terms in geometric
  progression}, Glasgow Math. J. \textbf{10} (1969), 137--146. \MR{0257022 (41
  \#1677)}

\bibitem{Roth}
K.~F. Roth, \emph{Sur quelques ensembles d'entiers}, C. R. Acad. Sci. Paris
  \textbf{234} (1952), 388--390. \MR{0046374 (13,724d)}

\bibitem{Roth2}
\bysame, \emph{On certain sets of integers}, J. London Math. Soc. \textbf{28}
  (1953), 104--109. \MR{0051853 (14,536g)}

\bibitem{Sanders}
T.~Sanders, \emph{On {R}oth's theorem on progressions}, Ann. of Math. (2)
  \textbf{174} (2011), no.~1, 619--636. \MR{2811612 (2012f:11019)}

\bibitem{Szemer}
E.~Szemer{\'e}di, \emph{Integer sets containing no arithmetic progressions},
  Acta Math. Hungar. \textbf{56} (1990), no.~1-2, 155--158. \MR{1100788
  (92c:11100)}

\bibitem{wahlstrom}
M.~Wahlstr{\"o}m, \emph{Exact algorithms for finding minimum transversals in
  rank-3 hypergraphs}, Journal of Algorithms \textbf{51} (2004), no.~2,
  107--121.

\end{thebibliography}
\end{document}